\newcommand{\R}{\mathbb{R}}
\newcommand{\rd}{\mathrm{d}}
\newcommand{\eps}{\varepsilon}
\newcommand{\ini}{{(\mathrm{I})}}
\newcommand{\e}{\mathrm{e}}
\newcommand{\bydef}{:=}
\newcommand{\bm}{\mathcal{BM}}
\newcommand{\bmp}{\mathcal{BM}^+}
\newcommand{\ueps}{u^{(\eps)}}
\newcommand{\veps}{v^{(\eps)}}
\newcommand{\D}{\mathcal{D}}
\newcommand{\Df}{\mathfrak{D}}
\newcommand{\Ep}{\mathcal{E}'}
\newcommand{\HH}{\mathcal{H}}
\newcommand{\ddt}{\frac{\mathrm{d}}{\mathrm{d}t}}
\newtheorem{thm}{Theorem}
\newtheorem{lem}{Lemma}
\newtheorem{prop}{Proposition}
\newtheorem{rmk}{Remark}
\newtheorem{defi}{Definition}
\title{Conservative parabolic problems:  non-degenerated theory and degenerated examples from population dynamics}
\author{%
Olga Danilkina\thanks{%
Department of Mathematics, 
College of Natural and Mathematical Sciences, 
the University of Dodoma, P.O. BOX 259, Dodoma, Tanzania.
olga.danilkina@gmail.com},
 Max O. Souza\thanks{%
 Departamento de Matem\'atica Aplicada,
 Universidade Federal Fluminense,
 R. M\'ario Santos Braga, s/n,
 Niter\'oi, RJ 24020-140, Brasil.
 maxsouza@id.uff.br
 },
  Fabio A. C. C. Chalub\thanks{Departamento de Matem\'atica 
and Centro de Matem\'atica e Aplica\c c\~oes, 
Universidade Nova de Lisboa,
Quinta da Torre, 2829-516, Caparica, Portugal.
chalub@fct.unl.pt}}
\begin{document}

\maketitle

\begin{abstract}
We consider partial differential equations (PDE) of drift-diffusion type in the unit interval, supplemented by either two conservation laws or  by a conservation law and a further boundary condition. We treat two different cases: (i) uniform parabolic problems; (ii) degenerated problems at the boundaries. The former can be treated in a very general and complete way, much as the traditional boundary value problems. The latter,  however, bring new issues, and we restrict our study to  a class of forward Kolmogorov equations that arise naturally when the corresponding stochastic process has either one or two absorbing boundaries. These equations are treated by means of a uniform parabolic regularisation, which then yields a measure solution in the vanishing regularisation limit that is unique. Two prototypical problems from population dynamics are treated in detail.
\end{abstract}

\section{Introduction}
\label{sec:intro}

\subsection{Background}

Partial differential equations (PDE) are an ubiquitous  tool for modelling a variety of phenomena in the applied sciences. Typically, a PDE in any given domain will not be well-posed, at least in the sense of uniqueness,  unless further assumptions are made on the solutions. Such assumptions can range from  traditional  boundary conditions in bounded domains --- eg. Dirichlet or  Neumann --- to  integrability conditions in unbounded domains, and might include a mixture of both --- as for instance, in the case of degenerated equations in part of the boundary. This is the traditional state of affairs as presented in most of the classical introductions to the subject \cite{John1982,Taylor96a,Evans98,Friedman2013}.

In a different perspective, a number of these models are derived from integral formulations that arise naturally from conservation laws that are expected to hold in the problem being modelled. This is well known in the literature of conservation laws in hyperbolic problems --- cf. \cite{Dafermos2005} --- but it also appear in other settings within hyperbolic problems: \cite{Pulkina1992,Bouziani:Benouar:1996,Pulkina2007}. In the context of parabolic operators, however, this class of problems seems to have received  much less attention.

A very similar class of problems, though, has received somewhat more attention: the solution of parabolic operators with the specification of  an integral constraint  and a boundary condition. The study of these problems seems to date back at least to \cite{Cannon1963,Deckert:Maple:1963}, who studied specific problems in this class for the heat equation. A proof of existence and uniqueness for  a problem in this class, with a general linear parabolic operator  can be found already in \cite{Kamynin1964}.  The subject has resurfaced from the late seventies to the early nineties with the works of  \cite{Ionkin1977,Ionkin1979,Cannon_Hoek,Cannon1986,Yurchuk1986,Shi1993}.

Beginning in the late nineties, there is a growing  interest in understanding the solutions of the heat equation subject to the specification of the first two linear moments, which seems to be first addressed by \cite{Bouziani1996}, but also include \cite{Bouziani1997,Bouziani2002,Dai2007}. Numerical methods for these problems were also developed in a series of works~\cite{Dehghan2003,Bouziani2008}. More recently, using a combination of variational and semigroup methods \cite{Mugnolo:Nicaise:2014} presents a very detailed theory for the heat and wave equation in this setting. See also \cite{Mugnolo:Nicaise:2014b} for companion results to diffusive equation with the $p$-Laplacian .  Further discussion and references can be found in \cite{Mugnolo:Nicaise:2014} and in the references therein.

As far as conservation laws for parabolic problems are concerned, the earliest work that we are aware of  is due to the two last authors in \cite{ChalubSouza09a}, which show that a degenerated parabolic equation --- the so-called generalised Kimura equation from population genetics --- subject to two conservation laws --- namely the conservation of probability and conservation of centering with respect to the fixation probability --- is well posed in the space of Radon measures. A generalisation of this problem to higher dimensions in the context of the Wright-Fisher process is given in \cite{ChalubSouza14}, and a study of the  PDE version for the SIS epidemiological model which is degenerated  at the origin and needs a boundary condition at 1 is given in \cite{ChalubSouza14b}. A recent work showing that the heat equation subject to conservation of the first two moments  is well posed in all $L^p$ spaces for $p\geq 1$, and even in $C^0$ is \cite{Bobrowski:Mugnolo:2013}.

\subsection{Degenerated problems from mathematical biology}

The ultimate goal  of this work is to understand how the solution of   parabolic conservative problems, which are degenerated at least in one of the boundaries,   can be approximated by the solutions of  non-degenerated problems. As a by-product, we will be able to characterise a condition the guarantees positiveness of the solution obtained. It will also  allow to obtain small  test function spaces where existence and uniqueness of the solution hold.

In what follows, we shall  focus in two examples. Our first example is the so called Generalised Kimura Equation~\cite{ChalubSouza09a}:
\begin{equation}\label{eq:replicator_diffusion}
 \partial_tu=\partial_x^2\left(x(1-x)u\right)-\partial_x\left(x(1-x)\psi(x)u\right)\ ,
\end{equation}
where $(x,t)\in[0,1]\times\R_+$ and $\psi:[0,1]\to\R$ is a  function in a space to be defined latter on (known as fitness in the biological literature).
The initial condition is given by 
\begin{equation}\label{eq:initial_condition}
 u^\ini=u(\cdot,0)\ge 0\ ,\quad x\in[0,1]\ ,
\end{equation}
and the solution of this equation should obey two integral conditions (conservation laws):
\begin{align}
\label{eq:conserv_law_1}
&\frac{\rd\ }{\rd t}\int_0^1 u\rd x=0\ ,\\
\label{eq:conserv_law_2}
&\frac{\rd\ }{\rd t}\int_0^1 \varphi(x)u\rd x=0\ ,
\end{align}
where $\varphi:[0,1]\to\R_+$ is the unique solution of 
\begin{equation}\label{fix_prob}
\varphi''+\psi(x)\varphi'=0,\ \text{with}\ \varphi(0)=0\ \text{and}\ \varphi(1)=1\ .
\end{equation}
Therefore, 
\begin{equation}\label{varphi_sol}
 \varphi(x)=\frac{\int_0^x\e^{-\int_0^y\psi(z)\rd z}\rd y}{\int_0^1\e^{-\int_0^y\psi(z)\rd z}\rd y}\ .
\end{equation}

The derivation of the equation above with the conservation laws was performed in~\cite{ChalubSouza09a}. It was extensively studied in work~\cite{ChalubSouza09b}. Generalisation for more dimensions can be found in~\cite{ChalubSouza14}. 
Different systems, but with similar characteristics were also studied in~\cite{ChalubSouza11,ChalubSouza14b}.
See also~\cite{EpsteinMazzeo2013}. A similar equation is studied in~\cite{Yang2014}, where mutations were allowed. In this last reference, as boundaries were not absorbing, boundary conditions were imposed and classical solutions were found. 

Our second example comes from epidemiology: the SIS-PDE model that is given by
\begin{equation}\label{sis_pde}
\partial_tp=-\partial_x\left\{x\left[R_0(1-x)-1\right]p\right\}
+\frac{1}{2}\partial_x^2\left\{x(R_0(1-x)+1)p\right\},
\end{equation}
satisfying the boundary condition
\begin{equation}\label{sis_pde_bc}
\left[ \frac{1}{2}\left[(1-R_0)p|_{1}+\partial_xp|_1\right]+p|_{1}\right]=0
\end{equation}
and the conservation law
\begin{equation}
\label{eqn:cons_law}
\frac{\rd}{\rd t}\int_0^1p(x,t)\,\rd x =0.
\end{equation}
This model is an intermediate model in between the Markov process associated to the \textbf{S}usceptible-\textbf{I}nfecious-\textbf{S}uscptible epidemiological model and its ODE counterpart. See~\cite{ChalubSouza14b} for further details.

Note that equations~(\ref{eq:replicator_diffusion}) and~(\ref{sis_pde}) are degenerated Fokker-Planck equations, for which no boundary condition can be imposed in the degenerated boundary, other than integrability. These equations are associated to diffusion approximation of Markov chains with absorbing states, and are studied, for instance, in~\cite{DiBenedetto93}; applications to biology in a framework similar to ours can be found in~\cite{Feller1951,Ewens,Ethier1976,EpsteinMazzeo2013}.

\subsection{Summary of results and outline}

We begin in Section~2 by defining a non-degenerated conservative parabolic problem, and the first result  shows that the conservative problem can be converted to a coupled boundary value problem, and that the conservations laws must be  related to the kernel of the formal adjoint. The next step is then to define non-degenerated  positive problems in terms of the conservation laws. For a very general  class of positive problems, we are able to show existence and uniqueness along the same lines of the more classical boundary conditions, by recurring to general Sturm-Liouville theory. In particular, if the coefficients of the equations are smooth, we obtain that the solutions are $C^\infty$. This Section can be seen as an extension of the theory presented in \cite{Mugnolo:Nicaise:2014}.

In Section 3 we start the study of our degenerate problems. We introduce a class of self-adjoint elliptic perturbations. These perturbed problems are then readily amenable to be treated by theory developed in the previous Section. It turns out that the solutions can be naturally treated as measures, and Prokohorov theorem can be used to pass the limit as the perturbation vanishes. This very weak solution then satisfies all the required conditions. A further analysis of the solution is presented in Section 4, where we obtain, depending on the regularity of the coefficients, several representation forms of the solution. In particular, we show that the solution can be written as the sum of two atomic measures at the boundaries and the classical solution.  Also, optimal domains of test functions for the weak solution are also discussed.

We close this work with a discussion of the results presented in Section~5.

\bigskip

\section{Conservative parabolic problems}
\label{sec:reg_cpp}
We begin by considering uniformly  parabolic problems in self-adjoint form. To this end, let
\[
Lv=\partial_x\left(p(x)\partial_xv\right)+q(x)v
\]
defined in some finite interval $I=(a,b)$, and with $p>0$, and with $p,q\in L^2(I)\cap L^\infty(I)$.

\begin{defi}
A (totally) conservative parabolic problem is an initial value problem of the form:
\begin{equation}
\label{eq:def_tcp}
\left\{
\begin{array}{lr}
\partial_tv=Lv,& \text{in }I\text{ and }t>0;\\
\ddt\langle v(\cdot,t),\phi_1\rangle=0,& t>0;\\
\ddt\langle v(\cdot,t),\phi_2\rangle=0,& t>0;\\
v(x,0)=v_0(x).&
\end{array}
\right.
\end{equation}
where $\langle\cdot,\cdot\rangle$ denotes the inner product in $L^2(I)$ and $\phi_1,\phi_2\in L^2(I)$ are not multiple of one another.
\end{defi}

If the infinitesimal generator  associated to  Equation~\eqref{eq:def_tcp} is self-adjoint, then it turns out that the possible choices for $\phi_1$ and $\phi_2$ are limited:

\begin{thm}
	\label{thm:eqv_cpp}
	The operator $L$ in \eqref{eq:def_tcp} can be taken to be self-adjoint if, and only if, $\phi_1$ and $\phi_2$ are linearly independent solutions of the ODE $Lv=0$, and if problem \eqref{eq:def_tcp} can be recast as a coupled (non-local) boundary value problem as follows
	\begin{equation}
	\label{eq:def_tcp_m}
	\left\{
	\begin{array}{lr}
	\partial_tv=Lv,& \text{in }I\text{ and }t>0;\\
	p(b)\left[\partial_xv(b,t)\phi_1(b)-v(b,t)\phi_1'(b)\right]-p(a)\left[\partial_xv(a,t)\phi_1(a)-v(a,t)\phi_1'(a)\right]=0,& t>0;\\
	p(b)\left[\partial_xv(b,t)\phi_2(b)-v(b,t)\phi_2'(b)\right]-p(a)\left[\partial_xv(a,t)\phi_2(a)-v(a,t)\phi_2'(a)\right]=0,& t>0;\\
	v(x,0)=v_0(x).&
	\end{array}
	\right.
	\end{equation}
	Furthermore, we have that  $\phi_1$ and $\phi_2$ are eigenfunctions of $L$ associated to the eigenvalue $\lambda=0$.
\end{thm}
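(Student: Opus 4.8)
The plan is to identify the ``infinitesimal generator associated to the problem'' with the operator $A$ acting as $Av=Lv$ on a dense domain $D(A)\subseteq L^2(I)$ that encodes the two conservation laws, and to translate both the conservation laws and self-adjointness into conditions on $\phi_1,\phi_2$ through the Lagrange (Green) identity. Since $\ddt\langle v(\cdot,t),\phi_i\rangle=\langle\partial_tv,\phi_i\rangle=\langle Av,\phi_i\rangle$, the two conservation laws hold along the evolution precisely when $\langle Av,\phi_i\rangle=0$ for every $v\in D(A)$, i.e. when $\phi_i\perp\mathrm{Ran}(A)$ for $i=1,2$.

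First I would record the Lagrange identity for the self-adjoint form of $L$: for sufficiently regular $u,w$,
\begin{equation*}
\langle Lu,w\rangle-\langle u,Lw\rangle=B(u,w),\qquad B(u,w)\bydef\big[p(x)\big(u'(x)w(x)-u(x)w'(x)\big)\big]_a^b,
\end{equation*}
the boundary terms being exactly those appearing in \eqref{eq:def_tcp_m} with $w=\phi_i$. The one computation I would single out is that, whenever $\phi_i,\phi_j$ both solve $L\phi=0$, Abel's identity makes $p(\phi_i'\phi_j-\phi_i\phi_j')$ constant in $x$, so that $B(\phi_i,\phi_j)=0$ for all $i,j$. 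Interpreting $B$ as a (symplectic) bilinear form on the four-dimensional space of boundary data $(w(a),w'(a),w(b),w'(b))$, this says that the boundary data of $\mathrm{span}\{\phi_1,\phi_2\}$ span a two-dimensional isotropic --- hence maximal isotropic (Lagrangian) --- subspace.

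For the sufficiency direction I would assume $\phi_1,\phi_2$ are linearly independent solutions of $Lv=0$. Then $\langle v,L\phi_i\rangle=0$, so the Lagrange identity reduces each conservation law $\langle Lv,\phi_i\rangle=0$ to the boundary relation $B(v,\phi_i)=0$, which are precisely the two non-local boundary conditions of \eqref{eq:def_tcp_m}; this gives the recasting. To see that the resulting realization of $L$ is self-adjoint, I would note that its domain is the $B$-annihilator of the Lagrangian subspace spanned by the boundary data of $\phi_1,\phi_2$; because that subspace is maximal isotropic it equals its own annihilator, so $B$ vanishes on $D(A)\times D(A)$ (symmetry) and the realization coincides with its adjoint (self-adjointness) by the standard symplectic characterisation of self-adjoint Sturm--Liouville boundary conditions.

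The converse is where the real work lies, and the cleanest route uses $\phi_i\perp\mathrm{Ran}(A)$ established at the outset. If $A$ is self-adjoint then $\mathrm{Ran}(A)^{\perp}=\ker(A^*)=\ker(A)$, whence $\phi_i\in\ker(A)$: in particular $\phi_i\in D(A)$ and $A\phi_i=L\phi_i=0$, so $\phi_1,\phi_2$ solve $Lv=0$ and are eigenfunctions of $L$ for the eigenvalue $\lambda=0$, which is exactly the ``Furthermore'' assertion. Equivalently, since any self-adjoint realization extends the minimal operator, $D(A)$ contains $C^\infty_c(I)$; for such $v$ the boundary term $B(v,\phi_i)$ vanishes and $\langle Lv,\phi_i\rangle=\langle v,L\phi_i\rangle=0$, forcing $L\phi_i=0$ by density. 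Once $L\phi_i=0$ is known, the Lagrange identity again collapses the conservation laws to \eqref{eq:def_tcp_m}, and linear independence follows because two non-proportional solutions of the second-order ODE $Lv=0$ span its solution space. The step I expect to be most delicate is making the domain $D(A)$ precise and justifying the operator-theoretic identities $\mathrm{Ran}(A)^{\perp}=\ker A$ in the present $L^2$ setting --- i.e. checking that $A$ is genuinely self-adjoint (closed, densely defined) rather than merely symmetric, so that the symplectic/Lagrangian dictionary and the range--kernel orthogonality may be invoked without gaps.
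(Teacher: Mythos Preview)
Your proposal is correct and follows essentially the same route as the paper: the Lagrange identity to convert the conservation laws into the coupled boundary relations, Abel's theorem to check that $\phi_1,\phi_2$ themselves satisfy those relations, and a density/range--kernel argument to force $L\phi_i=0$ in the self-adjoint case. The only noteworthy differences are in packaging: where you invoke the symplectic (Lagrangian-subspace) characterisation of self-adjoint Sturm--Liouville realisations directly, the paper simply cites the equivalent statement from Bailey et al.\ and Weidmann; and where the paper verifies by hand (via Abel) that each $\phi_i$ obeys both boundary conditions, you obtain $\phi_i\in D(A)$ in one stroke from $\mathrm{Ran}(A)^\perp=\ker A$. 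Your explicit acknowledgment that one must check $A$ is genuinely self-adjoint (not merely symmetric) before using $\mathrm{Ran}(A)^\perp=\ker A^*=\ker A$ is well placed---the paper is terser on exactly this point.
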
	
	
	\begin{proof}
		Assume $L$ is self-adjoint with domain $D(L)$. Then the conservation conditions are equivalent to
		\[
		\langle \varphi,L\phi_i\rangle=0,\quad i=1,2;\qquad \varphi\in D(L).
		\]
		Since $L$ is self-adjoint, it is densely-defined, and hence the identities above hold on $L^2(I)$. Therefore, we have that $L\phi_i=0$, i=1,2. Since they are not multiple of one another, they are linearly independent solutions of $Lv=0$. Also, under the assumptions on the coefficients of $L$, we have that any solution to the ODE is of class $C^1$. Thus, since $L$ is self-adjoint, direct integration by parts yields \eqref{eq:def_tcp_m}.
		
		Conversely, assume that  problem~\eqref{eq:def_tcp} is equivalent to problem~\eqref{eq:def_tcp_m}. Then a direct computation shows that  $L$ is  symmetric. Since $\phi_1,\phi_2$ are solutions of $Lv=0$,  it  follows immediately from Proposition~2.1 in \cite{Bailey:etal:1996}---see also \cite{Weidmann:1987,Kong1996}---that $L$ can be taken to be self-adjoint.
		
		For the last claim,  we already have that $L\phi_i=0$, for $i=1,2$. Therefore,  it remains  to show  that every $\phi_i$ satisfies the boundary conditions. We shall check this for $\phi_1$ --- the case of $\phi_2$ is analogous.  The first condition is easily verified by direct substitution of $\phi_1$ into $v$. In order to verify the  second condition, we write $B_2$ for its left hand side and compute:
		\begin{align*}
		B_2&=p(b)\left[\phi_1'(b) \phi_2(b)-\phi_1(b)\phi_2'(b)\right]-p(a)\left[\phi_1'(a)\phi_2(a)-\phi_1(a)\phi_2'(a)\right]\\
		&=p(b)W_{\{\phi_1,\phi_2\}}(b)-p(a)W_{\{\phi_1,\phi_2\}}(a)=0,
		\end{align*}
		where the last equality follows from Abel's theorem.
	\end{proof}

\begin{defi}
We say that \eqref{eq:def_tcp_m} is a non-negative conservative problem, if we have a conservative problem with $\phi_1$ and $\phi_2$ being non-negative.  If, in addition, there exists at least one solution  of the ODE $Lv=0$ that is positive everywhere then we say that the problem is intrinsically positive.
\end{defi}

\begin{lem}
Assume that  we have an intrinsically  positive conservative problem, and consider the  corresponding spectral problem
\[
-Lu=\lambda u,
\]
with the coupled boundary conditions.

Then,  we can order the eigenvalues such that we have 
\[
0=\lambda_1=\lambda_2<\lambda_3\leq \cdots \lambda_n\leq \lambda_{n+1}\leq\cdots.
\]
%
\end{lem}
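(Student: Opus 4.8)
The plan is to combine the general self-adjoint Sturm--Liouville theory already invoked with a ground-state factorisation built on the positive solution furnished by intrinsic positivity. It is convenient to set $\{f,g\}\bydef p\left(\partial_x f\,g-f\,\partial_x g\right)$, the weighted Wronskian, which is antisymmetric and lets the coupled boundary conditions of \eqref{eq:def_tcp_m} be written as $\left[\{u,\phi_i\}\right]_a^b=0$, $i=1,2$. First I would record, from the regular self-adjoint theory (Proposition~2.1 in \cite{Bailey:etal:1996}; see also \cite{Weidmann:1987,Kong1996}), that the spectrum of $-L$ under these conditions is real, bounded below, and discrete with finite multiplicities accumulating only at $+\infty$, so the eigenvalues may be listed increasingly. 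Since $Lv=0$ is second order, every eigenspace is at most two-dimensional. By Theorem~\ref{thm:eqv_cpp} the functions $\phi_1,\phi_2$ are linearly independent solutions of $Lv=0$ satisfying the boundary conditions; as they span the whole solution space, every solution of $Lv=0$ satisfies the boundary conditions, and $\lambda=0$ is an eigenvalue of multiplicity exactly $2$.

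It then remains to prove that $0$ is the least eigenvalue, i.e. that $-L\ge 0$. Let $w>0$ solve $Lw=0$ (intrinsic positivity), let $\tilde w=w\int^x \rd t/(pw^2)$ be a second, independent solution obtained by reduction of order, and note that $\{w,\tilde w\}$ is constant by Abel's theorem. The crucial identity is the factorisation $L(wz)=\tfrac{1}{w}\,\partial_x\!\left(pw^2\,\partial_x z\right)$. Taking a real eigenfunction $u=wz$ (the complex case splitting into real and imaginary parts) and integrating by parts,
\begin{equation*}
\langle -Lu,u\rangle=\int_I pw^2\left(\partial_x z\right)^2\rd x-\left[pw^2\,\partial_x z\;z\right]_a^b .
\end{equation*}
The bulk term is non-negative; the entire difficulty is the boundary term, which does \emph{not} vanish in general --- this is exactly what permits $\lambda=0$ to be double.

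To control it I would use both boundary conditions together. Writing $F\bydef pw^2\,\partial_x z=\{u,w\}$ and $\rd\mu\bydef \rd x/(pw^2)$, a positive measure of total mass $M=\mu(I)>0$, the condition $\left[\{u,w\}\right]_a^b=0$ gives $F(a)=F(b)=:K$, while $\left[\{u,\tilde w\}\right]_a^b=0$ yields $z(b)-z(a)=KM$; together these force $K=M^{-1}\int_I F\,\rd\mu$ and make the boundary term equal to $K^2M=\left(\int_I F\,\rd\mu\right)^2/M$. Hence
\begin{equation*}
\langle -Lu,u\rangle=\int_I F^2\,\rd\mu-\frac{\left(\int_I F\,\rd\mu\right)^2}{\mu(I)}\ge 0
\end{equation*}
by the Cauchy--Schwarz inequality, with equality if and only if $F$ is $\mu$-a.e. constant, i.e. if and only if $u\in\operatorname{span}\{w,\tilde w\}=\ker L$. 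This yields $-L\ge 0$ with kernel exactly the two-dimensional eigenspace at $0$, hence the ordering $0=\lambda_1=\lambda_2<\lambda_3\le\cdots$.

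The main obstacle is precisely this non-vanishing boundary term: the ground-state factorisation on its own settles only separated conditions, and the decisive step is to combine the two coupled conditions so as to recast the boundary contribution as the variance-type quantity above, after which Cauchy--Schwarz closes the estimate and simultaneously identifies the kernel. I expect the remaining points --- discreteness, the lower bound, and the increasing enumeration --- to be routine consequences of the cited self-adjoint theory once $-L\ge 0$ and the double eigenvalue at $0$ are in hand.
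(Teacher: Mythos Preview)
Your argument is correct, and it takes a genuinely different route from the paper. The paper's proof appeals to the oscillation theory for coupled self-adjoint Sturm--Liouville problems (specifically Theorem~13.5 in \cite{Weidmann:1987}): since an intrinsically positive problem admits eigenfunctions at $\lambda=0$ with no interior zeros, and eigenfunctions at $\lambda_n=\lambda_{n+1}$ must have $n-1$ or $n$ zeros, one is forced to have $n=1$, hence $\lambda_1=\lambda_2=0$. Your approach instead establishes $-L\ge0$ directly by a ground-state factorisation $u=wz$, reducing the quadratic form to a Dirichlet integral plus a boundary term, and then --- this is the nice point --- using the \emph{pair} of coupled conditions to rewrite that boundary term as $(\int F\,\rd\mu)^2/\mu(I)$, after which Cauchy--Schwarz yields the variance inequality $\int F^2\,\rd\mu\ge(\int F\,\rd\mu)^2/\mu(I)$ with equality precisely on $\ker L$. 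The paper's proof is shorter but leans on a somewhat specialised citation; yours is self-contained and elementary, and it also identifies the kernel directly from the equality case rather than from a separate multiplicity argument. Both are valid; yours would transplant more easily to settings where the coupled oscillation theorem is not readily available.
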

\begin{proof}
Since $p>0$, we have that the set of  eigenvalues are bounded from below and and is unbounded from above. In particular, we can order the eigenvalues such that 
\[
\lambda_1\leq\lambda_2\leq\lambda_3\leq \cdots \lambda_n\leq \lambda_{n+1}\leq\cdots,
\]
From Theorem~\ref{thm:eqv_cpp}, we already know that zero is an eigenvalue of multiplicity two.   Since the problem is intrinsically positive, we have a choice of two linear independent eigenfunctions that are positive everywhere. From the oscillatory theory of Sturm-Liouville problems \cite[Theorem 13.5]{Weidmann:1987}, if we have  $\lambda_n=\lambda_{n+1}$ then the corresponding eigenfunctions have either $n-1$ or $n$ zeros. Thus, if $\lambda=0$ is not the principal eigenvalue, any eigenfunction associated to it must have at least one zero in the interior, and this is not possible for an intrinsically positive problem. Hence $\lambda_1=\lambda_2=0$. Since the problem is second order, the eigenvalues can have multiplicity at most two, and hence $\lambda_3>0$.
\end{proof}
\begin{rmk}
\label{rmk:reg_evp}
Recall  that if $p,q\in L^2(I)$ then the eigenfunctions are in $H^2(I)$. In general, if $p,q\in C^k(I)$, then the  eigenfunctions will be of class $C^{k+2}$---cf. \cite{CL,Zettl2005}.
\end{rmk}
In what follows, we shall assume that $p$ and $q$ are given, and for an intrinsically positive problem we shall write $w_1=\psi_1$, $w_2=\psi_2$, where $\psi_1,\psi_2$ are eigenfunctions corresponding to the zero eigenvalue, with unit norm, and that are positive everywhere.  Furthermore, the set  $\{w_k,k\geq 3\}$ will correspond to the unit  eigenfunctions associated to the positive eigenvalues.

Before we state the next result, we  introduce the following harmonic spaces --- cf. \cite{Taylor96a}: for non-negative $s$, we define
\[
\HH_s=\left\{f \in L^1(I) \text{ s.t. } \sum_{k=0}^\infty \hat{f}(k)\lambda_k^{s/2}w_j  \in L^2(I)\right\},\quad \hat{f}(k)=\langle f,w_k)\rangle ,
\]
for $s\in\R$. Notice that
\[
\|f\|_s^2=\sum_{k=1}^\infty |\hat{f}(k)|^2\lambda_k^s,
\]
 and that we have $\HH_0=L ^2(I)$ and  $\HH_1=H ^1(I)$. In addition, we have that $\HH_{-s}$ is the dual of $\HH_s$, and that the Fourier series characterisation is still valid.

\begin{thm}
\label{thm:eup}
Consider the initial value problem \eqref{eq:def_tcp_m} and assume that it is intrinsically positive.  If we have  $v_0\in L^2(I)$, then, for any $T>0$,  there exists a unique solution in the class $C([0,T);L^2(I))\cap C^\infty((0,T);\HH_1)$. Furthermore, if $v_0\geq0$, then we have  $v(\cdot,t)\geq0$ for every $t$.
\end{thm}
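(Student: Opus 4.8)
The plan is to construct the solution explicitly by eigenfunction expansion, exploiting that the generator is self-adjoint with compact resolvent, and then to obtain the sign statement by a change of unknown that eliminates the zeroth-order term. By the Sturm--Liouville theory already invoked, the normalised eigenfunctions $\{w_k\}_{k\ge 1}$ form a complete orthonormal system in $L^2(I)$, with the ordering of the preceding lemma. Accordingly I would set
\begin{equation*}
v(x,t)=\sum_{k\ge 1}\hat v_0(k)\,\e^{-\lambda_k t}\,w_k(x),\qquad \hat v_0(k)=\langle v_0,w_k\rangle ,
\end{equation*}
which is forced, since the Fourier coefficients of any solution must satisfy $\ddt\hat v(k,t)=-\lambda_k\hat v(k,t)$.

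For existence, continuity and regularity I would argue as follows. At $t=0$ the series is the expansion of $v_0$; since $0\le\e^{-\lambda_k t}\le 1$, dominated convergence gives $\|v(\cdot,t)-v_0\|_{L^2}\to 0$ as $t\to 0^+$, so $v\in C([0,T);L^2(I))$. For $t>0$ the exponential factors beat every power of $\lambda_k$: differentiating the series $n$ times in $t$ and measuring in $\HH_1$ gives
\begin{equation*}
\|\partial_t^n v(\cdot,t)\|_1^2=\sum_{k\ge 1}|\hat v_0(k)|^2\,\lambda_k^{2n+1}\,\e^{-2\lambda_k t}\le C_n(t)\,\|v_0\|_{L^2}^2 ,
\end{equation*}
because $\sup_{\lambda\ge 0}\lambda^{2n+1}\e^{-2\lambda t}<\infty$ for each fixed $t>0$; the same estimate yields uniform convergence on compact subintervals of $(0,T)$ and hence $v\in C^\infty((0,T);\HH_1)$. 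Uniqueness follows from the energy identity $\ddt\|v\|_{L^2}^2=2\langle Lv,v\rangle=-2\sum_{k\ge 1}\lambda_k|\hat v(k)|^2\le 0$, which forces a zero initial datum to produce the zero solution.

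The substantive part is the preservation of sign, and this is where intrinsic positivity enters. Let $\psi_1=w_1>0$ be a strictly positive zero-eigenfunction, so $L\psi_1=0$, and write $v=\psi_1 w$. Using $L\psi_1=0$, a direct computation collapses the zeroth-order term and puts the equation in the weighted divergence form
\begin{equation*}
\psi_1^2\,\partial_t w=\partial_x\!\left(p\,\psi_1^2\,\partial_x w\right),
\end{equation*}
which is self-adjoint in $L^2(\psi_1^2\,\rd x)$ and---crucially---has no zeroth-order coefficient. Because $\psi_1>0$ on $\overline{I}$, the sign of $v$ is that of $w$, so it suffices to show that $w(\cdot,0)\ge 0$ forces $w(\cdot,t)\ge 0$. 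Testing the equation against the negative part $w^-=\max(-w,0)$ yields
\begin{equation*}
\ddt\int_I \psi_1^2\,(w^-)^2\,\rd x=-2\int_I p\,\psi_1^2\,|\partial_x w^-|^2\,\rd x-2\left[\,w^-\,p\,\psi_1^2\,\partial_x w\,\right]_a^b ,
\end{equation*}
the bulk term being non-positive since $p,\psi_1^2>0$.

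The main obstacle is the boundary term on the right. Unlike the Dirichlet or Neumann settings, the boundary conditions here are the non-local conservation conditions of \eqref{eq:def_tcp_m}; under the substitution the first becomes the flux-balance identity $[\,p\psi_1^2\,\partial_x w\,]_a^b=0$, while the second becomes a coupled condition featuring the quantity $p\,W_{\{\psi_1,\psi_2\}}$, which is constant by Abel's theorem. I would combine these two conditions with the strict positivity of the second zero-eigenfunction $\psi_2$---guaranteed by intrinsic positivity---to show that the boundary contribution of $w^-$ is non-positive. Then $\int_I\psi_1^2(w^-)^2\,\rd x$ is non-increasing and vanishes identically whenever it vanishes at $t=0$, giving $w\ge 0$ and hence $v\ge 0$. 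Reconciling the genuinely non-local boundary conditions with the maximum principle is the crux of the argument.
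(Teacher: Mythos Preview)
Your treatment of existence, regularity, and uniqueness via the eigenfunction expansion is fine and essentially coincides with the paper's. The difficulty is entirely in the sign-preservation step, and there your proposal contains a genuine gap.

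After the substitution $v=\psi_1 w$ the first conservation condition does become the flux balance $[p\psi_1^2\partial_x w]_a^b=0$; call the common flux value $J$. The second condition becomes, with $\rho=\psi_2/\psi_1$ and $c=pW_{\{\psi_1,\psi_2\}}$ (constant by Abel),
\[
c\,(w(a)-w(b))+J\,(\rho(b)-\rho(a))=0,
\]
and since $\rho'=c/(p\psi_1^2)$ one finds $J=\dfrac{w(b)-w(a)}{\int_a^b (p\psi_1^2)^{-1}\,\rd x}$. Hence $J$ has the \emph{same} sign as $w(b)-w(a)$. But the map $s\mapsto s^-=\max(-s,0)$ is non-increasing, so $w^-(b)-w^-(a)$ has the \emph{opposite} sign to $w(b)-w(a)$. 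Consequently the boundary contribution in your energy identity,
\[
-2\,[w^-\,p\psi_1^2\,\partial_x w]_a^b=-2J\,(w^-(b)-w^-(a)),
\]
is \emph{non-negative}, not non-positive. Concretely, for the heat equation on $[0,1]$ with $\psi_1\equiv1$, $\psi_2=x$, a configuration with $w(0)=1$, $w(1)=-1$ gives $J=-2$, $w^-(0)=0$, $w^-(1)=1$, and boundary term $+4$. So the step ``combine the two boundary conditions with positivity of $\psi_2$ to show the boundary contribution of $w^-$ is non-positive'' cannot be carried out as stated; intrinsic positivity of $\psi_2$ gives you no leverage here. One might hope to absorb the bad boundary term into the bulk Dirichlet integral, but the linear profile $w(x)=1-2x$ (which lies in the domain) shows the two exactly cancel, so there is no room.

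The paper takes a completely different route for positivity: it first observes that for $v_0>0$ the stationary part $a_1w_1+a_2w_2$ is strictly positive (since $a_i=\langle v_0,w_i\rangle>0$ and $w_1,w_2>0$), so that $v(\cdot,t)>0$ for all sufficiently large $t$. It then lets $t^*$ be the infimum of times after which $v\ge 0$, and argues by contradiction: if $t^*>0$, the parabolic Harnack inequality forces $v(\cdot,t^*)\equiv 0$, which violates the conserved quantity $\langle v(\cdot,t),\phi_1\rangle=\langle v_0,\phi_1\rangle>0$. Thus the conservation law is used not to control a boundary term but to rule out extinction. If you want to repair your argument, you will need a mechanism of this kind rather than a pointwise boundary estimate.
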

\begin{proof}
The existence proof is standard, and we have that
\[
v(x,t)=\sum_{k=1}^\infty a_k\e^{-\lambda_kt}w_k,\quad a_k=\langle v_0,w_k\rangle.
\]
Before we  discuss uniqueness, we shall investigate the positiveness of the solutions.
In order to show the positiveness of a given solution at any time, we first assume that $v_0>0$, and that $p$ and $q$ are smooth. In this case we have that  $v \in C([0,T);L^2(I))\cap C^\infty((0,T);C^{\infty}(I))$.

Let
\[
v_s=a_1w_1+a_2w_2 \text{ and } v_t=\e^{-\lambda_3t}\sum_{k=3}^\infty a_k\e^{-(\lambda_k-\lambda_3)t}w_k
\]
be the steady and transient parts of the solution.

Since the problem is positive, we have that $w_1,w_2>0$, and thus if $v_0>0$ we have $a_1,a_2>0$.

We also observe that
\[
\lim_{t\to\infty}\e^{\frac{\lambda_3}{2}t}v_t=0.
\]
Thence, there exists a time $T$ such that $v(\cdot,t)>0$, for $t\geq T$. Let $t^*\geq0$ be the minimal time such that $v(.,t)\geq0$, for $t\geq t^*$. Clearly $0\leq t^*<T$. 

Assume that $t^*>0$. Since $v$ is smooth,  we have that
\begin{equation}
\label{eqn:lim_ts}
\lim_{t\downarrow t^*}\inf_{\mathrm{int(I)}}{v(\cdot,t)}=0
\end{equation}
On the other hand, since the coefficients are bounded, we can assume without loss of generality that the strong maximum principle holds for $L$. Thus, the parabolic Harnack inequality---cf. \cite{Evans98,Lieberman92}---holds for $t\geq t^*$, and together with  Equation~\ref{eqn:lim_ts} yields that $v(\cdot,t^*)=0$. 

Let $\phi_1$ be a positive conservation law, which exists since the problem is intrinsically positive. Then we have
\[
0=\langle v(\cdot,t^*),\phi_1\rangle=\langle v(\cdot,T),\phi_1\rangle >0,
\]
which is a contradiction. Therefore $t^*=0$, and the solution is positive.

If $v_0\geq0$,  add a positive constant  $K$ to $v_0$, and then by  letting  $K\to0$ we obtain that $v(\cdot,t)\geq0$ as claimed.

Finally, the result for $p,q\in L^2(I)\cap L^\infty(I)$ follows by a standard mollification argument.

For uniqueness, we first observe that, if $v_0\equiv0$,  then we must have  $v(\cdot,t)\geq0$, for all time.   As before, let  $\phi_1$ be a positive conservation law  for \eqref{eq:def_tcp}. Then
\[
\langle v(\cdot,t),\phi_1\rangle = \langle v_0,\phi_1\rangle=0.
\]
Therefore, we have $v(\cdot,t){\color{yellow}\equiv}{\color{blue}=}0$, and uniqueness follows.
\end{proof}

We now want to briefly discuss the case when there is only one conservation law, namely:
\begin{defi}
A partially conservative problem is an initial value given by
\begin{equation}
\label{eq:pcp}
\left\{
\begin{array}{lr}
\partial_tv=Lv,& \text{in }I\text{ and }t>0;\\
\ddt\langle v(\cdot,t),\phi_1\rangle=0,& t>0;\\
B(v(a,t),v(b,t),\partial_x v(a,t),\partial_x v(b,t))=0,&\\
v(x,0)=v_0(x),&
\end{array}
\right.
\end{equation}
if  $L\phi_1=0$, and the corresponding spectral problem is self-adjoint. We shall also say that  \eqref{eq:pcp} is a positive problem, if $\phi_1>0$.
\end{defi}

Minor modifications of the previous arguments are necessary to prove the following:
\begin{thm}
\label{thm:pcp_eup}
Consider the initial value problem \eqref{eq:pcp} and assume that it is positive partially conservative problem.  If we have  $v_0\in L^2(I)$, then, for any $T>0$,  there exists a unique solution in the class $C([0,T);L^2(I))\cap C^\infty((0,T);\HH_1)$.  If, in addition, $\partial_tv-Lv$ satisfies the strong maximum principle, then for  $v_0\geq0$ we have that  $v(\cdot,t)\geq0$ for every $t$.
\end{thm}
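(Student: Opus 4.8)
The plan is to mirror the proof of Theorem~\ref{thm:eup}, identifying precisely where the second conservation law was used and replacing that role by the boundary condition $B$ together with the single conservation law. First I would establish the underlying spectral decomposition: since the problem is self-adjoint with $\phi_1 \in \ker L$, the associated operator $-L$ with the coupled boundary data (one integral constraint and one pointwise condition $B$) is self-adjoint and bounded below, so by Sturm--Liouville theory there is an orthonormal basis of eigenfunctions $\{w_k\}$ with eigenvalues $\lambda_1 \le \lambda_2 \le \cdots$. The key structural difference from the totally conservative case is that zero is now an eigenvalue of multiplicity \emph{one} rather than two, since only a single element of $\ker L$ (namely $\phi_1$) is retained as a conservation law while the other degree of freedom is fixed by $B$. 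Granting this, existence follows exactly as before by writing
\[
v(x,t)=\sum_{k=1}^\infty a_k \e^{-\lambda_k t} w_k, \qquad a_k=\langle v_0, w_k\rangle,
\]
and reading off the regularity $C([0,T);L^2(I))\cap C^\infty((0,T);\HH_1)$ from the smoothing of the exponential factors and the harmonic space characterisation of $\HH_1$; this part is genuinely routine.

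For uniqueness I would argue as in Theorem~\ref{thm:eup}: take $v_0 \equiv 0$ and use the single conservation law to conclude $\langle v(\cdot,t),\phi_1\rangle = \langle v_0,\phi_1\rangle = 0$ for all $t$. The delicate point is that in the totally conservative case positivity of the $v_0\equiv0$ solution was obtained from the positivity argument, and one then paired against the positive conservation law. Here the hypothesis on positivity is conditional (we only assume the strong maximum principle holds when we want to conclude nonnegativity), so for the unconditional uniqueness statement I would instead argue directly from the spectral representation: if $v_0\equiv 0$ then every $a_k=0$ and hence $v\equiv0$. This makes the uniqueness claim independent of any maximum principle, which is why the theorem can assert existence and uniqueness unconditionally while deferring nonnegativity to the extra hypothesis.

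For the nonnegativity statement, assuming $\partial_t v - Lv$ satisfies the strong maximum principle, I would reproduce the structure of the earlier proof: first treat $v_0>0$ with smooth coefficients, split $v=v_s+v_t$ into the steady part $v_s=a_1 w_1$ (now a single term, since zero is simple) and the transient part decaying like $\e^{-\lambda_2 t}$, observe that $v_s = a_1 w_1 > 0$ since $a_1=\langle v_0,\phi_1\rangle>0$ and $w_1>0$ (the principal eigenfunction of a positive problem is of one sign), and then run the minimal-time argument. Define $t^*$ as the infimum of times beyond which $v\ge0$; if $t^*>0$, smoothness forces $\inf_{\mathrm{int}(I)} v(\cdot,t^*)=0$, the parabolic Harnack inequality (now available by hypothesis) forces $v(\cdot,t^*)\equiv0$, and pairing against the positive conservation law $\phi_1$ gives the contradiction $0=\langle v(\cdot,t^*),\phi_1\rangle=\langle v_0,\phi_1\rangle>0$. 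One then passes from $v_0>0$ to $v_0\ge0$ by adding $K>0$ and letting $K\to0$, and from smooth coefficients to $p,q\in L^2(I)\cap L^\infty(I)$ by mollification.

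I expect the main obstacle to be the spectral setup rather than the dynamics: one must verify carefully that replacing a conservation law by the pointwise boundary condition $B$ still yields a genuinely self-adjoint Sturm--Liouville realisation and, crucially, that zero remains a \emph{simple} eigenvalue with a one-signed principal eigenfunction. The positivity of $w_1$ is what makes the pairing $\langle v(\cdot,t^*),\phi_1\rangle>0$ work, and establishing that $\lambda_1=0$ is the principal (hence simple, sign-definite) eigenvalue uses the oscillation theory exactly as in the lemma preceding Theorem~\ref{thm:eup}, but now with only one zero-eigenfunction to account for. Once this is in place, the ``minor modifications'' promised in the statement are indeed minor, and the remaining steps are direct transcriptions of the totally conservative argument.
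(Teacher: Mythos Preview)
Your proposal is correct and matches what the paper intends; the paper gives no explicit proof here, only the remark that ``minor modifications of the previous arguments are necessary,'' and your outline is precisely the natural adaptation of the proof of Theorem~\ref{thm:eup} with the double zero eigenvalue replaced by a simple one and the single positive conservation law $\phi_1$ doing all the work in the Harnack/contradiction step.

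One point worth flagging: you correctly notice that the uniqueness argument in Theorem~\ref{thm:eup} relies on positivity, which in Theorem~\ref{thm:pcp_eup} is only available under the extra strong-maximum-principle hypothesis, so a literal transcription would make uniqueness conditional as well. Your fix via the spectral expansion is the right instinct, but as phrased (``if $v_0\equiv0$ then every $a_k=0$'') it only shows the \emph{constructed} solution vanishes; for genuine uniqueness you must take an arbitrary solution $v$ in the stated class, set $c_k(t)=\langle v(\cdot,t),w_k\rangle$, and derive $c_k'=-\lambda_k c_k$ from self-adjointness. Equivalently, and perhaps more cleanly, the fact that $\phi_1>0$ is a zero-eigenfunction forces (by the oscillation argument you cite) $\lambda_1=0$ and hence $-L\ge0$, so the energy estimate $\tfrac{1}{2}\tfrac{\rd}{\rd t}\|v\|^2=\langle Lv,v\rangle\le0$ gives uniqueness without invoking positivity at all. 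Either route closes the gap; the rest of your sketch---simplicity of the zero eigenvalue, sign of $w_1$, the $t^*$ argument, and the $K\to0$ and mollification passages---goes through exactly as in the totally conservative case.
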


\begin{rmk}
The heat equation with homogeneous Neumann conditions is a positive partially conservative problem. Indeed, one can take $\phi_1=1$, and as an additional boundary condition that the flux in one of the endpoints should vanish. In this case, while the existence and uniqueness are well-known, Theorem~\ref{thm:pcp_eup}   provides which seems to be  a new alternative argument for positiveness that does not require knowledge of the boundary behaviour of the solution.
\end{rmk}

\begin{rmk}
For the general problem
\[
\left\{
\begin{array}{lr}
\partial_tu=Mu, t>0,\, x\in (a,b)\\
\ddt (u,\phi_1)=0,& t>0\\
\ddt (u,\phi_2)=0,& t>0\\
u(0,x)=u_0(x).&
\end{array}
\right.
\]
with
\[
Mu=a(x)\partial_x^2u+b(x)\partial_xu+c(x)u,
\]
and with the coefficient $a$ bounded away from zero, we set 
\[
 \eta(x)=\exp\left(\int_a^x\frac{b(s)}{a(s)}\,\rd s\right).
\]
Then, we can recast the problem as 
\[
\left\{
\begin{array}{lr}
\partial_tu=Lu, t>0,\, x\in (a,b)\\
\ddt (u,\psi_1)=0,& t>0\\
\ddt (u,\psi_2)=0,& t>0\\
u(0,x)=u_0(x).&
\end{array}
\right.
\]
with
\[
Lu=\frac{a}{\eta}\left[\partial_x\left(\eta\partial_x u\right)+\frac{c\eta}{a}u\right]
\]
and
\[
\psi_i=\frac{a}{\eta}\phi_i,\quad i=1,2.
\]
Then all previous results apply, provided all the spaces are weighted with respect to the measure
\[
\rd\mu=\frac{\eta}{a}\rd x.
\]
Notice also  that all the inner products, inclusive the ones in the conservation laws, are now given with respect to weighted inner product.
\end{rmk}

\begin{rmk}
Naturally, if one prescribe the linear moments to be constant, and compatible with the initial conditions, one gets a conservative parabolic problem. On the other way round,
by considering, without loss of generality, that $\phi_1$ and $\phi_2$ have unit norm and are orthogonal, if we specify
\[
\langle v(\cdot,t),\phi_i\rangle=F_i(t),\quad i=1,2.
\]
We can write 
\[
w(x,t)=v(x,t)-F_1(t)\phi_1(x)-F_2(t)\phi_2(x)
\]
Then $w$ satisfies a non-homogeneous conservative parabolic problem. Indeed, in this case we have
\[
\langle w(\cdot,t),\phi_i\rangle=0,\quad i=1,2.
\]
and
\[
\partial_t w=Lw+G(x,t),\quad G(x,t)=F'_1(t)\phi_1(x) + F'_2(t)\phi_2(x).
\]
This last problem can then be solved using the  Duhamel principle.
\end{rmk}

\section{From degenerated to non-degenerated problems and back}

We are interested in deal with Fokker-Planck equations of the following type
\begin{equation}
\partial_tu=\partial^2_x\left(gu\right)-\partial_x\left(g\psi u\right)=0
\label{eqn:gen_fp}
\end{equation}
where we shall always assume that $g\in C^\infty([0,1])$, that $g(0)=0$ and that $u$ satisfies
\begin{equation}
\frac{\rd}{\rd t}\int_0^1u(x,t)\,\rd x=0
\label{eqn:cons_law_prob_genfp}
\end{equation}

We shall interested in two different cases:
\begin{enumerate}
\item $g(1)=0$, and $u$ then further satisfies
\begin{equation}
\frac{\rd}{\rd t}\int_0^1\varphi(x)u(x,t)\,\rd x=0
\label{eqn:cons_law_fix_genfp}
\end{equation}
\item $g(1)>0$, and $u$ then further satisfies
\begin{equation}
\left.\partial_x(gu)-g\psi u\right|_{x=1}=0
\label{eqn:bnd_cond_genfp}
\end{equation}
\end{enumerate}

\subsection{Elliptic self-adjoint perturbations}
\label{ssec:perturbation}

Let $g_\eps:[0,1]\to\R$ be a positive smooth function such that 
\[
\lim_{\eps\to0}g_\eps(x)=g(x),\quad \text{pointwise},
\]
and consider the $\eps$-perturbed problem:
\begin{align}
\label{eq:epspert_1}
& \ueps_t=\left(g_\eps(x)\ueps\right)_{xx}-\left(g_\eps(x)\psi(x)\ueps\right)_x\ ,\\
\label{eq:epspert_2}
&\frac{\rd\ }{\rd t}\int \ueps(x,t)\rd x=0\ ,\\
\label{eq:epspert_3}
&\frac{\rd\ }{\rd t}\int \varphi(x)\ueps(x,t)\rd x=0\ ,\\
\label{eq:epspert_4}
&\ueps(x,0)=u_\ini(x)\ .
\end{align}
when $g(1)=0$. If $g(1)>0$  we then replace \eqref{eq:epspert_3} by
\begin{equation}
\left.\partial_x(g_\eps(x)\ueps(1,t))-g_\eps(x)\psi(x)\ueps(x,t)\right|_{x=1}=0.
\label{eq:epspert_5}
\end{equation}

This problem is now amenable to be treated using the ideas developed  in Section~\ref{sec:reg_cpp}. In order to write the problem in self-adjoint form and to obtain boundary conditions that are independent of $\eps$ we introduce the following change of variables:
\[
\ueps(x,t)=\frac{\veps(x,t)}{g_\eps(x)}p(x),\quad p(x)=\exp\left(\int_0^x\psi(y)\,\rd y.\right).
\]
In this new variable, we can apply Theorem~\ref{thm:eqv_cpp} and then the corresponding formulation given in \eqref{eq:def_tcp_m} for equations  \eqref{eq:epspert_1}, \eqref{eq:epspert_2}, \eqref{eq:epspert_3}, and \eqref{eq:epspert_4} becomes, respectively,
\begin{align}
\label{eq:mainmod}
\veps_t&=\frac{g_{\eps}}{p}\left(p\veps_x\right)_x\\
\label{eq:boundcondmod_1}
\veps_x(1,t)p(1)&=\veps_x(0,t)\\
\label{eq:boundcondmod_2}
\veps_x(1,t)p(1)-\varphi'(1)\veps(1,t)p(1)&=-\varphi'(0)v(0,t)\\
\label{eq:inicondmod}
\veps_\ini&\bydef\veps(x,0)= u_\ini\frac{g_{\eps}}{p}.
\end{align}
provided $g(1)=0$. If $g(1)>0$, we replaced \eqref{eq:boundcondmod_2} by
\begin{equation}
\partial_x\veps(1,t)=0
\label{eq:boundcondmod_3}.
\end{equation}
Theorem~\ref{thm:eup} applied to \eqref{eq:mainmod}, \eqref{eq:boundcondmod_1},\eqref{eq:boundcondmod_2}  and \eqref{eq:inicondmod} or applying Theorem~\ref{thm:pcp_eup} with \eqref{eq:boundcondmod_2} replaced by \eqref{eq:boundcondmod_3} yields the following result:
\begin{prop}
\label{prop:exist}
Let $u_\ini\in\bmp([0,1])$ and assume that $\psi \in L^1((0,1),\rd\mu)$. Then we have that \eqref{eq:mainmod}, \eqref{eq:boundcondmod_1} and \eqref{eq:boundcondmod_2} or \eqref{eq:boundcondmod_3})  has a unique solution in the class $C([0,\infty);\bmp([0,1]))\cap C^1([0,\infty);H^1((0,1)))$. Moreover, this solution can be written as
\[
\veps=\sum_{k=0}^\infty a_k\e^{-\lambda_kt}w_k,\quad a_k=(\veps_\ini,w_k).
\]
In addition, if $\veps_\ini>0$, then $\veps(\cdot,t)>0$.
\end{prop}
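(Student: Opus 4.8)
The plan is to deduce Proposition~\ref{prop:exist} directly from the abstract machinery of Section~\ref{sec:reg_cpp} by verifying that the $\eps$-perturbed problem, written in the self-adjoint form \eqref{eq:mainmod}--\eqref{eq:inicondmod}, falls precisely into one of the two frameworks already treated: an intrinsically positive (totally) conservative problem when $g(1)=0$, handled by Theorem~\ref{thm:eup}, and a positive partially conservative problem when $g(1)>0$, handled by Theorem~\ref{thm:pcp_eup}. So the first step is bookkeeping: identify the operator in \eqref{eq:mainmod} as $Lv=\frac{g_\eps}{p}(p v_x)_x$, which after weighting by the measure $\rd\mu=\frac{p}{g_\eps}\rd x$ (as in the general Remark) becomes self-adjoint, with coefficient $p=\exp(\int_0^x\psi)$ bounded away from zero and smooth since $g_\eps>0$ is smooth. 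I would emphasize that $g_\eps$ being strictly positive on $[0,1]$ is exactly what makes the problem \emph{non-degenerate}, so the regularity hypotheses $p\in L^2\cap L^\infty$ of Section~\ref{sec:reg_cpp} are met on each fixed $\eps$-level.

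Next I would check that the two conservation laws (resp. one conservation law plus boundary condition) really do correspond to the structure demanded by the definitions. For $g(1)=0$, I must exhibit two linearly independent solutions of $Lv=0$ that are positive everywhere, so that the problem is intrinsically positive. Solving $(p v_x)_x=0$ gives $v\equiv\mathrm{const}$ and $v=\int_0^x p(y)^{-1}\rd y$ as a fundamental pair; pulling back through the change of variables $\ueps=\frac{\veps}{g_\eps}p$ these correspond to the conserved functionals against $1$ and against $\varphi$, matching \eqref{eq:boundcondmod_1}--\eqref{eq:boundcondmod_2}. Both are positive on $(0,1)$, giving intrinsic positivity. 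For $g(1)>0$, the single conservation law pairs with the Neumann-type condition \eqref{eq:boundcondmod_3}, and I would note $\phi_1>0$ so that the problem is positive partially conservative in the sense of the relevant definition. With these identifications in place, Theorem~\ref{thm:eup} (resp. Theorem~\ref{thm:pcp_eup}) delivers the spectral representation $\veps=\sum_k a_k\e^{-\lambda_k t}w_k$ with $a_k=(\veps_\ini,w_k)$, the regularity class, and the positivity statement essentially for free.

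The one genuine subtlety — and the step I expect to be the main obstacle — is the function-space bookkeeping forced by the hypothesis $u_\ini\in\bmp([0,1])$, a \emph{measure} rather than an $L^2$ initial datum, whereas Theorem~\ref{thm:eup} is stated for $v_0\in L^2(I)$. I would argue that the change of variables $\veps_\ini=u_\ini\frac{g_\eps}{p}$ regularizes the datum: because $g_\eps$ is smooth and strictly positive while $\frac{1}{p}$ is bounded, multiplication by $\frac{g_\eps}{p}$ maps the finite measure $u_\ini$ into a genuine element of $L^2((0,1),\rd\mu)$ (at worst one must check the behaviour near the endpoints where $g$ — though not $g_\eps$ — degenerates, but at fixed $\eps$ there is no degeneration, so boundedness is uniform on $[0,1]$). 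The hypothesis $\psi\in L^1((0,1),\rd\mu)$ is what guarantees $p$ and $\rd\mu$ are well-defined and that the weighted inner products make sense. Once $\veps_\ini\in L^2$ is secured, the abstract theorems apply verbatim and the class $C([0,\infty);\bmp)\cap C^1([0,\infty);H^1)$ follows by translating the $\HH_1=H^1$ regularity of Theorem~\ref{thm:eup} back through the (smooth, at fixed $\eps$) change of variables, with the measure-valued continuity at $t=0$ coming from the weak-$*$ continuity of the series against test functions. The final positivity claim $\veps(\cdot,t)>0$ for $\veps_\ini>0$ is then immediate from the positivity half of Theorem~\ref{thm:eup} together with strict positivity of the principal eigenfunctions $w_1,w_2$.
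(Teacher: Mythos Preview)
Your approach is exactly the paper's: the proposition is presented there as an immediate consequence of Theorem~\ref{thm:eup} (when $g(1)=0$) or Theorem~\ref{thm:pcp_eup} (when $g(1)>0$), applied to the self-adjoint form \eqref{eq:mainmod}--\eqref{eq:inicondmod}, and the paper offers no argument beyond that single sentence of invocation. Your identification of $1$ and $\varphi$ (equivalently the pair of solutions of $(pv_x)_x=0$) as the positive kernel giving intrinsic positivity is the right verification.

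One caution on the subtlety you flag: your proposed resolution is incorrect as written. Multiplication by the smooth bounded factor $g_\eps/p$ does \emph{not} carry a general finite measure into $L^2((0,1),\rd\mu)$; if $u_\ini=\delta_{x_0}$ with $x_0\in(0,1)$ then $\veps_\ini=\frac{g_\eps(x_0)}{p(x_0)}\delta_{x_0}$ is still a point mass. The paper itself does not address this gap, so you are not missing anything the paper supplies. If you wish to close it honestly, the natural route is to observe that the eigenfunctions $w_k$ are at least $C^1$ (cf.\ Remark~\ref{rmk:reg_evp}), so the pairings $a_k=(\veps_\ini,w_k)$ make sense for measure data, and then to use the exponential factors $\e^{-\lambda_k t}$ together with standard Weyl asymptotics $\lambda_k\sim ck^2$ and polynomial bounds on $\|w_k\|_{C^0}$ to show the series converges in $H^1$ for every $t>0$, with weak-$*$ (i.e.\ $\bmp$) continuity down to $t=0$. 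That is what justifies the stated class $C([0,\infty);\bmp)\cap C^1([0,\infty);H^1)$ rather than the $C([0,T);L^2)$ class of Theorem~\ref{thm:eup}.
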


\begin{rmk}
\label{rmk:extra_reg}
If $\psi$ is continuous then  the eigenfunctions $w_k$ are $C^2$, and hence we have that $\veps$ is a $C^2$  classical solution for $t>0$.
\end{rmk}

\subsection{The vanishing perturbation limit}

\label{ssec:limits}

Since $\veps$  are positive we have also that  $\ueps$ are  positive, and because of  conservation law \eqref{eqn:cons_law_prob_genfp}  they can be seen as Radon measures  with a fixed given mass. In this case,   Prohorov's theorem --- cf. \cite{Billingsley:1999} --- implies  we have a (subsequencewise) limit as $\eps$ goes to zero. Namely,
\begin{prop}
Fix $T>0$ and let $\mathcal{C}(T)=[0,1]\times[0,T]$. Then, by passing a subsequence if necessary, we can assume that $\ueps\to u^{(0)}$ such that $u^{(0)}\in\bmp(\mathcal{C}(T))$.
\end{prop}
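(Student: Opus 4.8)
The plan is to apply Prohorov's theorem to the family $\{\ueps\}_{\eps>0}$, viewed as a family of finite positive measures on the compact set $\mathcal{C}(T)=[0,1]\times[0,T]$. The key observation is that tightness for measures on a compact metric space is automatic, so the only genuine requirement of Prohorov's theorem that must be verified is a uniform bound on the total masses. First I would establish this uniform mass bound. By Proposition~\ref{prop:exist} each $\veps$ is positive, and through the change of variables $\ueps=\frac{\veps}{g_\eps}p$ with $p>0$ and $g_\eps>0$, positivity of $\veps$ transfers to positivity of $\ueps$. The conservation law \eqref{eqn:cons_law_prob_genfp} then gives $\int_0^1\ueps(x,t)\,\rd x=\int_0^1 u_\ini(x)\,\rd x$ for all $t\in[0,T]$, a constant independent of both $\eps$ and $t$. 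Integrating over $t\in[0,T]$ shows that the total mass of $\ueps$ on $\mathcal{C}(T)$ equals $T\int_0^1 u_\ini\,\rd x$, which is finite and uniform in $\eps$.

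With the uniform mass bound in hand, I would invoke Prohorov's theorem: the family $\{\ueps\}$, being tight (trivially, on a compact space) and uniformly bounded in total mass, is relatively compact in the weak-$*$ topology on $\bmp(\mathcal{C}(T))$. Hence there is a subsequence $\ueps$ (not relabelled) and a limit $u^{(0)}\in\bmp(\mathcal{C}(T))$ such that $\ueps\to u^{(0)}$ weakly-$*$, i.e. $\int_{\mathcal{C}(T)} f\,\rd\ueps\to\int_{\mathcal{C}(T)} f\,\rd u^{(0)}$ for every $f\in C(\mathcal{C}(T))$. Positivity of the limit is preserved under weak-$*$ convergence of positive measures, so $u^{(0)}$ indeed lies in $\bmp(\mathcal{C}(T))$ rather than merely in $\bm(\mathcal{C}(T))$, which is exactly the asserted conclusion.

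The step I expect to be the main obstacle is making precise in what sense $\ueps$ is regarded as a measure on the two-dimensional set $\mathcal{C}(T)$, as opposed to a time-indexed family of measures on $[0,1]$. The cleanest route is to identify $\ueps$ with the product-type measure $\ueps(x,t)\,\rd x\,\rd t$ on $\mathcal{C}(T)$; one must check that this is genuinely a finite positive Radon measure, which follows from the fact that, by Proposition~\ref{prop:exist}, $\ueps\in C([0,\infty);\bmp([0,1]))$, so $(x,t)\mapsto\ueps(x,t)$ is jointly well behaved enough for Fubini to apply and for the space-time total mass to be computed as above. A secondary subtlety is that Prohorov's theorem only yields subsequential convergence, so the statement is correctly hedged with ``by passing to a subsequence if necessary''; no uniqueness of the limit is claimed at this stage, and indeed uniqueness will require the separate analysis announced for the later sections.
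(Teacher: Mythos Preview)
Your proposal is correct and follows essentially the same approach as the paper: the paper does not write out a formal proof but, in the paragraph immediately preceding the proposition, observes that positivity of $\ueps$ together with the conservation law~\eqref{eqn:cons_law_prob_genfp} gives a fixed mass, and then invokes Prohorov's theorem to obtain the subsequential limit. Your argument spells out these steps in more detail --- in particular the computation of the space--time total mass $T\int_0^1 u_\ini\,\rd x$ and the automatic tightness on the compact set $\mathcal{C}(T)$ --- but the underlying idea is identical.
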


\begin{rmk}
First is important to observe that  we cannot interchange   the limits $T\to\infty$ and $\eps\to0$. Indeed, from the solution in the proof of Proposition~\ref{prop:exist} we have that  $\lim_{t\to\infty}\ueps(\cdot,t)$ is nonzero, regular and independent of $\eps$, since $1$ and $\varphi$ are independent of $\eps$. On the other hand, we shall see in Section~\ref{sec:prop} that the large time limit of $u^{(0)}$ is an atomic measure supported at the boundaries.
\end{rmk}
We now obtain an weak equation in weak form for the limiting measure. Let
\[
\Gamma\bydef C^1_c([0,\infty);\mathcal{D}),
\]
where 
\[
\D=\left\{\eta \in C^2((0,1))\cap C^1([0,1])\text{ such that \eqref{eq:boundcondmod_1} and \eqref{eq:boundcondmod_2} are satisfied}\right\}.
\]
The corresponding weakest formulation of \eqref{eq:mainmod} is given by
\begin{align*}
-\int_0^\infty\int_0^1 v^{(\eps)}(x,t)\frac{p(x)}{g_\eps(x)}\partial_t\alpha(x,t)\,\rd x\,\rd t - \int_0^1v^{(\eps)}(x,0)\frac{p(x)}{g_\eps(x)}\alpha(x,0)\,\rd x = \int_0^\infty\int_0^1v^{(\eps)}(x)\partial_x\left(p(x)\partial_x\alpha(x,t)\right)\rd x\,\rd t,
\end{align*}
with $\alpha\in\Gamma$.

Using the relationship between $u^{(\eps)}$ and $v^{(\eps)}$, the definition of $\rd\mu$ and that $p'=\psi p$ we obtain
\begin{align*}
-\int_0^\infty u^{(\eps)}(x,t)\partial_t\alpha(x,t)\,\rd x\,\rd t - \int_0^1u^{(\eps)}(x,0)\alpha(x,0)\,\rd x = \int_0^\infty\int_0^1u^{(\eps)}(x)g_\eps(x)\left(\partial^2_x\alpha(x,t) +\psi(x)\partial_x\alpha(x,t)\right)\rd x\,\rd t.
\end{align*}
Now, we consider the limit $\eps\to 0$:
\begin{prop}
\label{prop:conv}
The limiting measure $u^{(0)}$ is in the class $L^\infty([0,\infty);\bmp([0,1]))$, and it satisfies
\begin{align}
\nonumber
&-\int_0^\infty\int_0^1 u^{(0)}(x,t)\partial_t\alpha(x,t)\,\rd x\,\rd t - \int_0^1u^{(0)}(x,0)\alpha(x,0)\,\rd x\\
&\qquad= \int_0^\infty\int_0^1u^{(0)}(x,t)g(x)\left(\partial^2_x\alpha(x,t) +\psi(x)\partial_x\alpha(x,t)\right)\rd x\,\rd t,
\label{eq:wf_restr}
\end{align}
with test functions $\alpha\in\Gamma$.    In addition, it satisfies the conservation laws \eqref{eqn:cons_law_prob_genfp} and \eqref{eqn:cons_law_fix_genfp}, when $g(1)=0$ and the conservation law \eqref{eqn:cons_law_fix_genfp} and the boundary condition \eqref{eqn:bnd_cond_genfp}, when $g(1)>0$.
\end{prop}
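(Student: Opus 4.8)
The plan is to pass to the limit $\eps\to0$ in the $\eps$-dependent weak formulation that immediately precedes the statement, and then to verify separately that each of the conservation laws (or the boundary condition) survives the limit. Since the previous proposition already provides, after extracting a subsequence, a limit $u^{(0)}\in\bmp(\mathcal{C}(T))$ for each fixed $T>0$, the first task is to upgrade this parabolic-domain convergence to the statement that $u^{(0)}\in L^\infty([0,\infty);\bmp([0,1]))$. I would obtain this from the uniform-in-$\eps$ mass bound: by \eqref{eq:epspert_2} the total mass $\int_0^1\ueps(x,t)\,\rd x$ is conserved in $t$ and equals the fixed initial mass of $u_\ini$, so the slices $\ueps(\cdot,t)$ lie in a fixed bounded ball of $\bmp([0,1])$ for a.e.\ $t$; passing to the limit preserves this essential bound and yields membership in $L^\infty([0,\infty);\bmp([0,1]))$.

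Next I would justify the term-by-term passage to the limit in the weak form. Fix $\alpha\in\Gamma$, so $\alpha$ is compactly supported in $t$ and $\eta=\alpha(\cdot,t)\in\D$, in particular $\alpha$ and its spatial derivatives up to second order are bounded and continuous on $[0,1]$. For the two left-hand terms, $\partial_t\alpha$ and $\alpha(\cdot,0)$ are fixed continuous test functions, so the weak-* convergence $\ueps\to u^{(0)}$ against continuous functions on $\mathcal{C}(T)$ (and on $[0,1]$ at $t=0$) passes the limit directly. The delicate term is the right-hand side, where the factor $g_\eps(x)\bigl(\partial_x^2\alpha+\psi\,\partial_x\alpha\bigr)$ depends on $\eps$. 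Here I would split $g_\eps=(g_\eps-g)+g$: since $g_\eps\to g$ pointwise and all functions involved are uniformly bounded, dominated convergence (against the fixed total-mass bound on $\ueps$) kills the $(g_\eps-g)$ contribution, while the $g$ contribution converges by the same weak-* convergence applied to the fixed continuous function $g\bigl(\partial_x^2\alpha+\psi\,\partial_x\alpha\bigr)$. This produces exactly \eqref{eq:wf_restr}.

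Finally I would verify the conservation laws and, in the case $g(1)>0$, the boundary condition. The two integral identities \eqref{eqn:cons_law_prob_genfp} and \eqref{eqn:cons_law_fix_genfp} hold for every $\ueps$ by \eqref{eq:epspert_2} and \eqref{eq:epspert_3}, and since the test functions $1$ and $\varphi$ are $\eps$-independent and continuous, these identities are stable under the weak-* limit, so they persist for $u^{(0)}$. The boundary condition \eqref{eqn:bnd_cond_genfp} when $g(1)>0$ is not an integral identity and cannot be read off from weak-* convergence of measures alone; I would instead recover it from the weak formulation itself. The point is that $\D$ was defined to encode the boundary conditions \eqref{eq:boundcondmod_1}--\eqref{eq:boundcondmod_2} (resp.\ \eqref{eq:boundcondmod_3}) on the test functions, and choosing test functions $\alpha$ that do \emph{not} vanish at $x=1$ forces, upon formal integration by parts in \eqref{eq:wf_restr} against the nondegenerate flux at the right endpoint, precisely the no-flux relation \eqref{eqn:bnd_cond_genfp}; the nondegeneracy $g(1)>0$ is what makes the boundary term nontrivial and hence constrains $u^{(0)}$ there.

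I expect the main obstacle to be the right-hand side limit, specifically controlling the $\eps$-dependence of $g_\eps$ jointly with the weak convergence of the measures $\ueps$: weak-* convergence pairs a measure against a \emph{fixed} continuous function, but here the integrand $g_\eps(\partial_x^2\alpha+\psi\,\partial_x\alpha)$ is itself moving with $\eps$. The $g_\eps=g+(g_\eps-g)$ splitting resolves this cleanly only because the total mass of $\ueps$ is uniformly bounded and $g_\eps-g\to0$ in a sufficiently uniform sense; verifying that this pointwise convergence of $g_\eps$ is strong enough (e.g.\ uniform on $[0,1]$, or dominated with an integrable envelope against the mass bound) is the crux, and is where the smoothness assumption $g\in C^\infty([0,1])$ together with the construction of the regularisers $g_\eps$ must be invoked.
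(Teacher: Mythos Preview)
Your approach is essentially the same as the paper's: pass to the limit in the weak formulation using the uniform mass bound, then verify the conservation laws by taking limits in \eqref{eq:epspert_2}--\eqref{eq:epspert_3}. Your $g_\eps=g+(g_\eps-g)$ splitting is in fact more explicit than the paper, which simply invokes ``standard arguments'' for the convergence step.

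There is, however, a gap in your treatment of the boundary condition \eqref{eqn:bnd_cond_genfp} when $g(1)>0$. You propose to recover it by choosing test functions in $\D$ that do not vanish at $x=1$ and then integrating by parts in \eqref{eq:wf_restr}; you even flag the integration by parts as ``formal''. The issue is that at this stage $u^{(0)}$ is only known to be a measure, so it has no pointwise boundary values or derivatives at $x=1$, and the boundary term you want to isolate is not defined. The encoding of boundary conditions in $\D$ constrains the \emph{test} functions, not the solution; it does not by itself force any pointwise behaviour of $u^{(0)}$. The paper closes this gap by first observing that near $x=1$ the equation is uniformly parabolic (since $g(1)>0$), and then invoking local parabolic regularity with test functions compactly supported in $(1/2,1]\times(0,T]$ to conclude that $u^{(0)}$ is in fact smooth in a neighbourhood of $x=1$. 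Only after this regularity step is the integration by parts legitimate and the boundary condition recovered. You should insert this regularity argument before the integration by parts.
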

\begin{proof}
Convergence follows from standard arguments; \cite{Billingsley:1999}. 
When $g(1)=0$, it remains only to show the conservation laws. This can be done either by appealing to standard convergence theorems and taking the limits in \eqref{eq:epspert_2} and \eqref{eq:epspert_3} or, if $\psi$ is at least continuous,  as in \cite{ChalubSouza09a}, by considering test functions of the form
\[
\alpha(x,t)=\beta(t)\gamma(x),\quad \beta(t)\in C^1_{c}((0,\infty)),\quad \gamma\in\mathrm{span}\{1,\varphi\}.
\]
When $g(1)>0$, the conservation law \eqref{eqn:cons_law_prob_genfp} is verified analogously. The boundary condition \eqref{eqn:bnd_cond_genfp} can be verified by  first observing that the solutions has to be smooth near $x=1$. This can be seen by considering test functions with compact support in $(1/2,1]\times(0,T]$, and then appealing by local parabolic regularity.  Integration by parts then yields the result.
\end{proof}

\begin{rmk}
Notice that regardless of the regularity of $\psi$, we always have that $1$ is in $\D$. On the other hand, we have $\varphi\in\D$ if, and only if, $\psi$ is continuous.
\end{rmk}

 \section{Further properties of the  weak solution}
 \label{sec:prop}

We now proceed to understand what properties the solution to \eqref{eq:wf_restr} possesses. In particular, when $g(1)=0$, we show uniqueness and that the solution found here is the same as the one found in \cite{ChalubSouza09b}
where the test space is taken to be $C^1_c([0,\infty);C^2([0,1]))$.

Before we can state our results,  we need a decomposition result  for compact distributions in \cite{ChalubSouza09b}, which we recall for the convenience of the reader. 

\begin{lem}[Decomposition]
\label{lem:decomp}
Denote by $\Ep$ the space of compactly supported distributions in $\R$. Let $\nu\in\Ep$ with $\mathop{\mathrm{sing\ supp}}(\nu)\subset [0,1]$. Then the setwise decomposition 
\[
[0,1]=\{0\}\cup(0,1)\cup\{1\}
\]
yields a decomposition in $\nu$, namely
\[
\nu=\nu_0+\mu+\nu_1,
\]
where $\nu_i$ is a compact distribution supported at $x=\{i\}$, and we also have that  $\mathop{\mathrm{sing\ supp}}(\mu)\subset(0,1)$. Moreover, if $\nu$ is a Radon measure, then $\mu\in\bm((0,1))$ and $\nu_i=c_i\delta_i$, where $\delta_i$ are normalised atomic measures with support in $x=\{i\}$. 
\end{lem}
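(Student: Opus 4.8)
The plan is to prove the decomposition by exploiting the local structure theorem for compactly supported distributions supported at a single point, combined with a partition-of-unity argument to separate the contributions from the three pieces $\{0\}$, $(0,1)$, and $\{1\}$. First I would recall the classical fact (cf.\ Schwartz, or \cite{Friedman2013}) that any distribution supported at a single point $x=i$ is a finite linear combination of the Dirac mass $\delta_i$ and its derivatives; this immediately gives the form of the atomic parts $\nu_0$ and $\nu_1$ in the distributional setting.

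To carry out the separation, I would choose cut-off functions: pick $\chi_0,\chi_1\in C^\infty(\R)$ with $\chi_0\equiv 1$ near $0$ and supported in a small left-neighbourhood of $0$, and symmetrically $\chi_1\equiv 1$ near $1$ and supported in a small right-neighbourhood of $1$, with the two supports disjoint. Then I would set $\nu_0\bydef\chi_0\,\nu$ and $\nu_1\bydef\chi_1\,\nu$ and define $\mu\bydef\nu-\nu_0-\nu_1$. The key observation is that $\nu$ has $\mathop{\mathrm{sing\ supp}}(\nu)\subset[0,1]$, so on the open set where $\chi_0$ transitions from $1$ to $0$ (away from the point $0$) the distribution $\nu$ is smooth; hence $\nu_0$ differs from a genuinely point-supported distribution only by a smooth piece that can be absorbed into $\mu$. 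I would argue that, after this absorption, $\nu_0$ and $\nu_1$ are supported exactly at $\{0\}$ and $\{1\}$ respectively, and that $\mathop{\mathrm{sing\ supp}}(\mu)\subset(0,1)$, since near the endpoints $\mu$ coincides with a smooth function by construction.

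For the measure refinement, I would specialise the above to the case where $\nu$ is a Radon measure. Multiplication of a Radon measure by a smooth cut-off again yields a Radon measure, so $\nu_0,\nu_1,\mu$ are all measures. A positive (or signed) measure supported at a single point must be a scalar multiple of the point mass, which rules out the derivative-of-delta terms that are possible in the purely distributional case; this forces $\nu_i=c_i\delta_i$ with $\delta_i$ the normalised atom at $i$. Finally $\mu=\nu-\nu_0-\nu_1$ is a measure whose singular support lies in $(0,1)$, giving $\mu\in\bm((0,1))$.

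The main obstacle I anticipate is making the separation genuinely \emph{well-defined} and \emph{canonical}: the naive products $\chi_0\nu$ and $\chi_1\nu$ depend on the choice of cut-offs through their smooth parts, so the claim that $\nu_0$ is supported \emph{exactly} at $\{0\}$ requires discarding the smooth transition contribution. I would handle this by invoking the structure theorem to replace $\chi_0\nu$ by its purely singular part at $0$, and then verifying that the leftover smooth remainder, together with the interior singularities, assembles into a well-defined $\mu$ with $\mathop{\mathrm{sing\ supp}}(\mu)\subset(0,1)$ independent of the arbitrary choices. The delicate point is thus the uniqueness/consistency of the decomposition rather than its existence, and I would make this precise by characterising $\nu_i$ intrinsically as the restriction of $\nu$ to distributions supported at $\{i\}$.
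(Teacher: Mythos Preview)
The paper does not prove this lemma: it is explicitly stated as a result recalled from \cite{ChalubSouza09b} for the reader's convenience, so there is no in-paper proof to compare your proposal against.

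Your approach is nonetheless a correct and standard way to establish the statement. The partition-of-unity construction combined with the structure theorem for point-supported distributions does the job, and you correctly flag the one genuine subtlety, namely that $\chi_0\nu$ depends on the cut-off through a smooth remainder and that the canonical object is the purely singular part at the endpoint. Two small comments: first, your phrase ``supported in a small left-neighbourhood of $0$'' should simply read ``a small neighbourhood of $0$''; second, in the Radon-measure case you are working harder than necessary, since countable additivity of a finite measure on the Borel partition $\{0\}\cup(0,1)\cup\{1\}$ gives $\nu=\nu(\{0\})\delta_0+\nu|_{(0,1)}+\nu(\{1\})\delta_1$ directly, with no cut-offs or structure theorem required.
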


We are now ready to discuss the two classes of examples that we are considering here.

\subsection{The generalised Kimura equation}
\label{ssec:gen_kimura}

The main result is then as follows

\begin{thm}
\label{thm:thm_prop}
Let $\Df$ be a domain such that  $C^2_c((0,1))\oplus\mathop{\mathrm{span}}(\{1\})\subset\Df$,  
and consider  \eqref{eq:wf_restr}, with test functions in $\Df$ and  initial condition   $u_\ini\in\bmp([0,1])$.  Applying the decomposition in Lemma~\ref{lem:decomp}, we can write
\begin{equation}
\label{eq:ic_dec}
u_\ini=a_0\delta_0 +r_0+b_0\delta_1,\quad  a_0,b_0,r_0\geq0.
\end{equation}
Then any solution to \eqref{eq:wf_restr} can   be represented as 
\[
u^{(0)}(\cdot,t)=a(t)\delta_0 + r + b(t)\delta_1,
\]
where $r$ is the unique  strong solution to  \eqref{eq:replicator_diffusion} without any boundary condition and initial condition $r_0$, and $a,b:[0,\infty)\to[0,\infty)$ are smooth. Furthermore,  we have that  
\begin{align*}
a(t)&=a_0+\int_0^1r_0(x)(1-\varphi(x))\,\rd x - \int_0^1r(x,t)(1-\varphi(x))\,\rd x\\
b(t)&=b_0+\int_0^1u_\ini(x)\varphi(x)\,\rd x - \int_0^1r(x,t)\varphi(x)\,\rd x.
\end{align*}
In particular, we have 
\[
u^{(0)}\in C^\infty((0,\infty); H^1)\cap C^0([0,\infty);\bmp),
\]
and that $u^{(0)}$ is unique in this class.
\end{thm}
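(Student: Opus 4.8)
The plan is to decompose the limiting measure $u^{(0)}(\cdot,t)$ using Lemma~\ref{lem:decomp} at each fixed $t$, obtaining $u^{(0)}(\cdot,t)=a(t)\delta_0+\mu(\cdot,t)+b(t)\delta_1$ with $\mu(\cdot,t)\in\bmp((0,1))$, and then to show three things: that the interior part $\mu$ is governed by the strong (classical, no boundary condition) solution $r$ of~\eqref{eq:replicator_diffusion}; that the atomic weights $a(t),b(t)$ are forced to take the stated explicit form by the two conservation laws; and finally that these data together determine $u^{(0)}$ uniquely, giving the claimed regularity. The key structural input is that $1$ and $\varphi$ both lie in the test-function domain (here $1\in\Df$ always, and $\varphi\in\Df$ exactly when $\psi$ is continuous, as noted in the Remark following Proposition~\ref{prop:conv}), so that the two conservation laws~\eqref{eqn:cons_law_prob_genfp} and~\eqref{eqn:cons_law_fix_genfp} can be used as scalar constraints.

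First I would show the interior regularity. Taking test functions $\alpha\in C^2_c((0,1))$, which are admissible since $C^2_c((0,1))\subset\Df$, the boundary contributions vanish and the weak formulation~\eqref{eq:wf_restr} says precisely that the restriction of $u^{(0)}$ to the open interval is a distributional solution of~\eqref{eq:replicator_diffusion}. Because $g\in C^\infty$ and $g>0$ on compact subsets of $(0,1)$, the equation is uniformly parabolic there, so local parabolic regularity (as already invoked in the proof of Proposition~\ref{prop:conv} near $x=1$) upgrades $\mu$ to a classical solution $r$ on $(0,1)\times(0,\infty)$; I would identify $r$ with the unique strong solution having initial datum $r_0$ from the decomposition~\eqref{eq:ic_dec}. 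The atomic parts, supported on the boundary, do not interact with these interior test functions, which is exactly why they are invisible to this step and must be pinned down separately.

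Next I would recover $a(t)$ and $b(t)$. Choosing $\alpha(x,t)=\beta(t)\gamma(x)$ with $\gamma\in\mathrm{span}\{1,\varphi\}$, as in the proof of Proposition~\ref{prop:conv}, the conservation laws give $\langle u^{(0)}(\cdot,t),1\rangle=\langle u_\ini,1\rangle$ and $\langle u^{(0)}(\cdot,t),\varphi\rangle=\langle u_\ini,\varphi\rangle$ for all $t$. Since $\varphi(0)=0$ and $\varphi(1)=1$, evaluating these pairings on the decomposition $a(t)\delta_0+r(\cdot,t)+b(t)\delta_1$ yields two linear equations,
\[
a(t)+\int_0^1 r(x,t)\,\rd x+b(t)=\langle u_\ini,1\rangle,\qquad \int_0^1\varphi(x)r(x,t)\,\rd x+b(t)=\langle u_\ini,\varphi\rangle,
\]
which solve explicitly for $b(t)$ and then $a(t)$; substituting $\langle u_\ini,1\rangle=a_0+\int r_0+b_0$ and $\langle u_\ini,\varphi\rangle=\int\varphi\,u_\ini$ reproduces the stated formulas. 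Smoothness of $a,b$ in $t$ follows from the $C^\infty$-in-time regularity of $r$ on $(0,\infty)$ inherited from Theorem~\ref{thm:eup}/Proposition~\ref{prop:exist}; continuity up to $t=0$ and nonnegativity of $a,b$ follow from the nonnegativity of $u^{(0)}$ (Proposition~\ref{prop:conv}) and of $r$.

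Finally, uniqueness is immediate from this representation: any solution in the stated class must, by the interior argument, have the same interior part $r$ (uniqueness of the strong solution), and then the two conservation-law constraints determine $a(t),b(t)$ uniquely, so two solutions coincide. The main obstacle I anticipate is the low regularity of $\psi$: when $\psi$ is merely $L^1(\rd\mu)$ the function $\varphi$ need not lie in $\Df$, so the second conservation law cannot be tested directly and one must instead pass the limit $\eps\to0$ in~\eqref{eq:epspert_3} (as the alternative route in Proposition~\ref{prop:conv} already signals) to justify $\langle u^{(0)}(\cdot,t),\varphi\rangle=\langle u_\ini,\varphi\rangle$; carefully handling this limiting step, together with ensuring the decomposition of Lemma~\ref{lem:decomp} is consistent in $t$ (i.e.\ that the splitting into atoms plus interior measure commutes with the time-continuity of $u^{(0)}$), is where the real care is needed, while the parabolic regularity and the linear algebra for $a,b$ are routine.
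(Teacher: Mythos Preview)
Your proposal is correct and follows essentially the same route as the paper: decompose via Lemma~\ref{lem:decomp}, use test functions in $C^2_c((0,1))$ to identify the interior part with the unique strong solution $r$, and then substitute the decomposition into the two conservation laws to solve the resulting $2\times2$ linear system for $a(t)$ and $b(t)$. Your closing paragraph about the low-regularity case---that when $\varphi\notin\Df$ the second conservation law must be inherited from Proposition~\ref{prop:conv} by passing to the limit in \eqref{eq:epspert_3} rather than by direct testing---is exactly the alternative the paper signals, and your explicit invocation of local parabolic regularity to upgrade the interior distributional solution is a point the paper's proof handles only by citing well-posedness of the boundary-free problem.
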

 \begin{proof}
Notice that $C^2_c((0,1))\subset\mathfrak{D}$. Hence, by restricting to that domain we obtain the weak formulation to \eqref{eq:replicator_diffusion} without any boundary condition, and initial condition given by $r_0$. This equation is known to be well posed and to have a strong solution in $C^1((0,\infty);H^1(0,1))\cap C^0([0,\infty);\bmp)$. 

By applying the decomposition in $u^{(0)}$ together with \eqref{eq:wf_restr}  and still restricting to test functions in $C^2_c((0,1))$, we conclude that the non-atomic part of $u^{(0)}$ must be $r$.  

Since $u^{(0)}$ is a Radon measure, Lemma~\ref{lem:decomp} yields
\begin{equation}
\label{eq:decomp_uzero}
u^{(0)}(\cdot,t)=a(t)\delta_0+r(\cdot,t) + b(t)\delta_1.
\end{equation}
The formulas for $a$ and $b$ follow from direct substitution of \eqref{eq:decomp_uzero} in the integrated forms of \eqref{eq:conserv_law_1} and \eqref{eq:conserv_law_2}.
 \end{proof}
 
 \begin{rmk}
 In particular, we have that the solution is unique, and that it does not depend on the particular domain $\mathfrak{D}$---provided it satisfies the required condition. Notice that both $\mathcal{D}$ and $C^1_0([0,\infty);C^2([0,1])$ both satisfies this requirement, and hence the solution obtained from the elliptical perturbations is the same as the one obtained in \cite{ChalubSouza09b}. In particular, the asymptotic limit in time is given by
 \[
u^{(0)}_\infty(x)=a_\infty\delta_0 + b_\infty\delta_1,\quad (a_\infty,b_\infty)\bydef\lim_{t\to\infty}(a(t),b(t)).
 \]
 \end{rmk}

Further regularity in $\psi$ allows for a more detailed description of $a$ and $b$:

\begin{thm}
\label{thm:thm_prop2}
In the same framework of Theorem \ref{thm:thm_prop} assume, in addition,  that $\psi$ is continuous and that $\Df\supset C^2_c((0,1))\oplus\mathop{\mathrm{span}}(\{1,\varphi\})$. Then we have that
\[
u^{(0)}\in C^\infty((0,\infty);C^2([0,1]))\cap C^0([0,\infty);\bmp([0,1])).
\]
Moreover, we have that
\[
a(t)=a_0+\int_0^tr(0,s)\,\rd s
\quad\text{and}\quad
b(t)=b_0+\int_0^tr(1,s)\,\rd s.
\]
\end{thm}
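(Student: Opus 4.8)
The plan is to first upgrade the regularity of the non-atomic part $r$ under the hypothesis $\psi\in C^0$, and then to convert the integral formulas inherited from Theorem~\ref{thm:thm_prop} into the boundary-flux formulas by differentiating in $t$ and integrating by parts, exploiting the defining ODE \eqref{fix_prob} for $\varphi$ together with the vanishing of $g$ at both endpoints. For the regularity, I would note that continuity of $\psi$ makes $\varphi\in C^2([0,1])$ through \eqref{fix_prob}, so $\varphi\in\D$ by the remark following Proposition~\ref{prop:conv}, and the coefficient $g\psi$ of \eqref{eq:replicator_diffusion} is continuous. Appealing to the enhanced regularity for the degenerate Kimura operator (as in \cite{ChalubSouza09b}; cf. Remark~\ref{rmk:extra_reg}), the strong solution $r$ with datum $r_0$ then lies in $C^\infty((0,\infty);C^2([0,1]))$, that is, up to the closed interval. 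Since $\delta_0,\delta_1$ are fixed atoms, this gives the asserted class for $u^{(0)}$, the coefficients $a,b$ being as smooth in $t$ as the boundary traces that will appear as their derivatives.

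To obtain the formulas I would start from the $\varphi$-conservation law of Proposition~\ref{prop:conv}, namely $\int_0^1\varphi\,u^{(0)}(\cdot,t)\,\rd x=\int_0^1\varphi\,u_\ini\,\rd x$, which together with the decomposition and $\varphi(0)=0$, $\varphi(1)=1$ gives $b(t)=b_0+\int_0^1\varphi\,r_0\,\rd x-\int_0^1\varphi\,r(\cdot,t)\,\rd x$. Differentiating and writing the flux $J=\partial_x(gr)-g\psi r$, so that $\partial_t r=\partial_x J$, I get $b'(t)=-\int_0^1\varphi\,\partial_x J\,\rd x$. Integrating by parts, the boundary term is $-[\varphi J]_0^1=-J(1,t)$, while the interior integral $\int_0^1\varphi' J\,\rd x$ vanishes: a further integration by parts (whose boundary contributions disappear because $g(0)=g(1)=0$) reduces it to $-\int_0^1 g\,r\,(\varphi''+\psi\varphi')\,\rd x=0$ by \eqref{fix_prob}. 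Evaluating $J$ at $x=1$ with $g(1)=0$ and $g'(1)=-1$ yields $J(1,t)=g'(1)r(1,t)=-r(1,t)$, hence $b'(t)=r(1,t)$; integrating and using continuity of $b$ with $b(0)=b_0$ gives the stated formula. The formula for $a$ follows identically from $a(t)=a_0+\int_0^1 r_0(1-\varphi)\,\rd x-\int_0^1 r(\cdot,t)(1-\varphi)\,\rd x$, where the surviving boundary term is now at $x=0$ and $g'(0)=1$ produces $a'(t)=J(0,t)=r(0,t)$.

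The main obstacle is precisely the boundary regularity invoked in these integrations by parts: since \eqref{eq:replicator_diffusion} degenerates at both endpoints, the standard parabolic boundary estimates do not apply, and one must rely on the tailored degenerate theory to guarantee that $r$ and the flux $J$ admit continuous traces at $x=0,1$ for $t>0$, and that the traces $r(0,\cdot)$ and $r(1,\cdot)$ are integrable near $t=0$ so that the fundamental theorem of calculus reproduces the continuity of $a$ and $b$ at the initial time. Once these traces are secured, every manipulation above is routine, and the smoothness of $a,b$ on $(0,\infty)$ is immediate from $a'=r(0,\cdot)$, $b'=r(1,\cdot)$ and $r\in C^\infty((0,\infty);C^2([0,1]))$.
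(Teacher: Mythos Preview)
Your argument is correct, and the regularity part matches the paper's. For the formulas for $a$ and $b$, however, you take a somewhat different (and more explicit) route than the paper.

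The paper substitutes the decomposition $u^{(0)}=a\delta_0+r+b\delta_1$ directly into the weak formulation \eqref{eq:wf_restr}, integrates by parts using the improved regularity of $r$, and is left with boundary terms of the form $\int_0^\infty a(t)\partial_t\alpha(0,t)\,\rd t$, $\int_0^\infty r(0,t)\alpha(0,t)\,\rd t$, etc.; choosing test functions $\alpha$ vanishing at one endpoint then reads off $a'=r(0,\cdot)$ and $b'=r(1,\cdot)$. You instead start from the already-derived integral formulas of Theorem~\ref{thm:thm_prop}, differentiate in $t$, and carry out the integration by parts against the specific weights $1-\varphi$ and $\varphi$, using the ODE \eqref{fix_prob} to kill the interior term and the identities $g(0)=g(1)=0$, $g'(0)=1$, $g'(1)=-1$ to evaluate the flux. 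Your approach is more hands-on and makes the cancellation mechanism (the adjoint relation $\varphi''+\psi\varphi'=0$) completely transparent, while the paper's route is shorter and stays within the weak framework that $u^{(0)}$ is already known to inhabit. Your honest flagging of the trace-integrability issue near $t=0$ is apt; it is handled by the continuity of $a,b$ at $t=0$ (from Theorem~\ref{thm:thm_prop}) together with the nonnegativity of $r$, which forces convergence of $\int_{t_0}^t r(0,s)\,\rd s$ as $t_0\downarrow 0$.
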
 
 
 \begin{proof}
 The extra regularity follows from the improved regularity of the eigenfunctions---cf. Remark~\ref{rmk:extra_reg}. 
 Then direct substitution of \eqref{eq:decomp_uzero} in \eqref{eq:wf_restr}, and on taking advantage of the improved regularity of $r$ to integrate by parts yields 
 \[
 \int_0^\infty a(t)\partial_t\alpha(0,t)\,\rd t + a_0\alpha(0,0)+b_0\alpha(1,0) + \int_0^\infty r(0,t)\alpha(0,t)\,\rd t + \int_0^\infty r(1,t)\alpha(1,t)\,\rd t.
 \]
 Choosing test functions that vanish at either endpoint then finishes the proof.
 \end{proof}

\subsection{The SIS-PDE model}
\label{ssec:sis}

Now, we return to the SIS-PDE model~(\ref{sis_pde})--(\ref{eqn:cons_law}).
Let
\[
F(x)=R_0(1-x)+1,\quad \text{and} \quad H(x)=x+\frac{2}{R_0	}\log\left(\frac{F(x)}{F(0)}\right).
\]
Also let
\[
\omega(x)=\frac{P(x)}{xF(x)},\quad P(x)=\exp(2H(x)),
\]
We shall also  write
\[
\omega^\epsilon=\frac{P(x)}{(x+\epsilon)F(x)}
\quad\text{and}\quad
\veps(x,t)=\frac{p(x,t)}{\omega^\epsilon(x)}
\]
Then, replacing the conservation law by the corresponding non-local boundary condition,  we obtain
\begin{align}
&\partial_t\veps=\frac{1}{2\omega^\epsilon(x)}\partial_x\left(P(x)\partial_x\veps\right)\nonumber\\
&P(1)\partial_x\veps(1,t)-P(0)\partial_x\veps(0,t)=0\nonumber\\
&\partial_x\veps(1,t)=0,\label{eqn:transf_regpart}\\
&\veps(x,0)=\frac{1}{\omega^{\epsilon}(x)}p_\ini(x)\nonumber
\end{align}
Theorem 3 then immediately translates into the following result:
\begin{thm}
For each $\epsilon>0$, Equation~\eqref{eqn:transf_regpart} has a unique solution, that is positive (non-negative) if $p_\ini$ is positive (non-negative). We also have that $\veps\to v^{(0)}$ weakly in  $[0,1]$, as $\epsilon\to0$. Let $p^{(0)}$ be the  limiting solution in the original variables. Then it satisfies:
\begin{align}
\nonumber
&\int_0^\infty\int_0^1p^{(0)}(x,t)\partial_t\phi(x,t)\rd x\rd t\\
\nonumber
&\quad+ \frac{1}{2}\int_0^\infty\int_0^1p^{(0)}(x,t)x\left(R_0(1-x)+1\right)\partial_x^2\phi(x,t)\rd x\rd t\\
\nonumber
&\quad+\int_0^\infty\int_0^1p^{(0)}(x,t)x\left(R_0(1-x)-1\right)\partial_x\phi(x,t)\rd x\rd t\\
&\quad+\int_0^1p^{(0)}(x,0)\phi(x,0)\rd x=0.
\label{eqn:weak_sis}
\end{align}
In particular it satisfies the conservation law~\eqref{eqn:cons_law}, and it can be written as
\begin{equation}\label{eqn:decomp_sol}
p^{(0)}(t,x)=a(t)\delta_0+r(x,t),\quad a(t)=\frac{R_0+1}{2}\int_0^tr(0,s)\,\rd s + a_0,
\end{equation}
and where $r$ satisfies equation~\eqref{sis_pde} with the boundary condition
\begin{equation}
\label{eqn:bc_reg_part}
\frac{1}{2}\left((1-R_0)r(1,t)+\partial_xr(1,t)\right)+r(1,t)=0
\end{equation}
\end{thm}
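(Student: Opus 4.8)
The plan is to mirror, almost verbatim, the structure of the proof of Theorem~\ref{thm:thm_prop} and its refinement Theorem~\ref{thm:thm_prop2}, since the SIS-PDE model is the $g(1)>0$ analogue of the generalised Kimura equation. First I would verify that the transformed problem \eqref{eqn:transf_regpart} is indeed a positive partially conservative problem in the sense of the relevant definition: the operator $v\mapsto \frac{1}{2\omega^\eps}\partial_x(P\partial_x v)$ is in self-adjoint form with respect to the weight $\rd\mu = \omega^\eps\,\rd x$, with $P>0$ on $[0,1]$ since $F>0$ there; the constant function $\phi_1=1$ is annihilated by the operator and is strictly positive, supplying the single conservation law, while the remaining relation is the genuine boundary condition $\partial_x\veps(1,t)=0$. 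Thus Theorem~\ref{thm:pcp_eup} (``Theorem 3'') applies directly and yields existence, uniqueness, the spectral representation, and positivity for each fixed $\eps>0$. The vanishing-perturbation step is then handled exactly as in Proposition~\ref{prop:conv}: positivity plus conservation of mass makes the $\ueps$ (here $p^{(\eps)}$) a tight family of Radon measures of fixed mass, so Prohorov's theorem gives a subsequential weak limit $p^{(0)}\in\bmp$, and passing to the limit in the weak formulation produces \eqref{eqn:weak_sis}. The conservation law \eqref{eqn:cons_law} survives the limit by testing against $\gamma(x)=1$, exactly as before.

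For the structural decomposition I would invoke Lemma~\ref{lem:decomp}. The essential difference from the Kimura case is that here only one boundary degenerates: $g(1)=\tfrac12\cdot 1\cdot F(1)>0$ while the drift/diffusion coefficient $x(R_0(1-x)+1)$ vanishes only at $x=0$. Consequently I expect no atom to form at $x=1$; the boundary condition \eqref{eqn:bnd_cond_genfp}, which becomes \eqref{eqn:bc_reg_part} in the original variables, forces the regular part $r$ to carry all the behaviour near $x=1$. Restricting the test functions in \eqref{eqn:weak_sis} to $C^2_c((0,1))$ identifies the non-atomic part of $p^{(0)}$ as the strong solution $r$ of \eqref{sis_pde} subject to \eqref{eqn:bc_reg_part}, with initial datum the non-atomic part $r_0$ of $u_\ini$. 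Lemma~\ref{lem:decomp} then gives $p^{(0)}(\cdot,t)=a(t)\delta_0+r(\cdot,t)$, and smoothness of $r$ for $t>0$ follows from the local parabolic regularity already noted in the proof of Proposition~\ref{prop:conv} together with Remark~\ref{rmk:extra_reg}.

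To extract the explicit formula for $a(t)$ in \eqref{eqn:decomp_sol} I would substitute the decomposition $p^{(0)}=a(t)\delta_0+r$ into the weak form \eqref{eqn:weak_sis} with a general test function $\phi$, integrate by parts in $x$ on the regular part (legitimate once $\psi$-type regularity gives $r\in C^2$ up to the degenerate boundary for $t>0$), and collect the boundary terms at $x=0$. The $\delta_0$ term contributes $\int_0^\infty a(t)\partial_t\phi(0,t)\,\rd t + a_0\phi(0,0)$; the boundary terms from $r$ at $x=0$ contribute a multiple of $\int_0^\infty r(0,t)\phi(0,t)\,\rd t$. Computing the coefficient of that boundary term from the operator $\tfrac12\partial_x^2\{x(R_0(1-x)+1)p\}-\partial_x\{x[R_0(1-x)-1]p\}$ evaluated as $x\to0$ should produce the factor $\tfrac{R_0+1}{2}$, matching $F(0)=R_0+1$. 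Choosing test functions that vanish at $x=0$ confirms that $r$ alone satisfies the regular problem, and matching the coefficient of $\phi(0,t)$ yields the integral identity $a(t)=a_0+\tfrac{R_0+1}{2}\int_0^t r(0,s)\,\rd s$.

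The main obstacle will be the boundary-term bookkeeping at $x=0$ that fixes the constant $\tfrac{R_0+1}{2}$: one must justify the integration by parts against $\delta_0$, control the behaviour of $r$ and its flux as $x\downarrow 0$ where the coefficient degenerates, and verify that no spurious atom or flux escapes at $x=1$ (which is where the boundary condition \eqref{eqn:bc_reg_part}, rather than an atom, absorbs the dynamics). Establishing that the flux of $r$ at the degenerate endpoint is exactly what feeds the growing atom — and hence that total mass is conserved between $r$ and $a(t)\delta_0$ — is the delicate point; everything else is a direct transcription of the arguments already used for the generalised Kimura equation.
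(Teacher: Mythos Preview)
Your proposal is correct and follows exactly the approach the paper intends: the paper gives no separate proof for this theorem, merely stating that ``Theorem 3 then immediately translates into the following result,'' so the argument is understood to be a direct transcription of the machinery already developed --- Theorem~\ref{thm:pcp_eup} for the regularised problem, Proposition~\ref{prop:conv} for the vanishing-perturbation limit, and the decomposition argument of Theorems~\ref{thm:thm_prop}--\ref{thm:thm_prop2} adapted to a single degenerate endpoint. Your outline fills in precisely those details, including the identification of the boundary flux constant $\tfrac{R_0+1}{2}=\tfrac{F(0)}{2}$ at $x=0$, which the paper leaves implicit.
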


\section{Discussion}\label{sec:discussion}

This work is a first systematic step into the theory of conservative and degenerated parabolic problems. These problems appear in a number of modelling situations, as discussed in Section~\ref{sec:intro}. We restrict ourselves to 1+1 problems, and begin by presenting the theory for the case of  uniformly parabolic infinitesimal generators in
Section~\ref{sec:reg_cpp}, which  provides a comprehensive formulation for such problems from the point of view of Sturm-Liouville theory. As a by-product of the analysis, we also present a proof of persistence of positiveness that seems to be new even for the traditional Neumann problem for the Heat equation, since it does not require the study of the solution at the boundaries. The work in this section can be seen as an alternative approach to \cite{Mugnolo:Nicaise:2014} that naturally extends for equations with non-constant coefficients. They also show how these non-local problems can be brought into a similar framework than the more classical boundary value problems.

If the operator is degenerated at at least one of the endpoints then two situations can arise: if the operator is self-adjoint, the analysis goes through unchanged. Otherwise, although one can still bring the problem into self-adjoint form,  it is possible that the solution in the original variables will not be  integrable.  An example of this situation is the generalised Kimura equation~(\ref{eq:replicator_diffusion}), and this is studied in sections~\ref{ssec:perturbation} and \ref{ssec:gen_kimura}. Indeed, when working in self-adjoint variables for this problem, one needs to enforce homogeneous Dirichlet conditions which, in turn, yield a negative-definite problem --- hence no conservation seems possible. However, by a perturbation argument we can apply the results of Section~\ref{sec:reg_cpp}, and by considering positive solutions and the weakest formulation for the problem, we can take the limit of vanishing perturbation  in measure space, and obtain a measure solution for the original equation. This was discussed in section~\ref{ssec:limits}. 

In section~\ref{ssec:sis}, we  presented  an example of a recently derived  Fokker-Plank equation~(\ref{sis_pde}) that arises from an epidemiological problem, and that is half-degenerated, supplemented by one conservation law. The technique used to obtain measure solutions of this problem is similar to the one described previously in this section.

The degenerated examples presented already suggest that a  very natural extension of this work is to study in more detail the endpoint degenerated cases.  The aim would be to  provide a complete classification of these problems, when the infinitesimal generator is not self-adjoint. 

Another natural extension is to study the case in more than one spatial variable. The analysis in Section~\ref{sec:reg_cpp} made extensive use of 1-d Sturm-Liouville theory, the generalisation to more dimensions is far from straightforward. One example of interest is the generalised Kimura equation obtained in~\cite{ChalubSouza14} (more restricted examples appear in~\cite{Ewens,EpsteinMazzeo2013}). A second possible example of interest, that generalizes the SIS-PDE studied in this work, is the SIR-PDE (where a third class of individuals, the \textbf{R}emoved individuals is considered). Although the derivation is not complete, some preliminary results were presented in~\cite{ChalubSouza11}.

\section*{Acknowledgements}
OD: Supported by the Erasmus Mundus Action 2 Programme of the European Union, and Universidade Nova de Lisboa.

\noindent MOS:   Partially supported by CNPq under grant   \#~308113/2012-8 and acknowledge the hospitality of CMA/FCT/UNL.

\noindent FACCC: Partially supported by FCT/Portugal Strategic Project UID/MAT/00297/2013 (Centro de Matemática e Aplicações, Universidade Nova de Lisboa) and by a ``Investigador FCT'' grant.

\bibliographystyle{abbrv}
\bibliography{Degref.bib}

\end{document}